\theoremstyle{plain}
\newtheorem{thm}{Theorem}[section]
\newtheorem*{theorem*}{Main theorem}
\newtheorem{lem}[thm]{Lemma}
\newtheorem{cor}[thm]{Corollary}
\theoremstyle{definition}
\theoremstyle{definition}
\newtheorem{rem}[thm]{Remark}
\def\Bn{\mathbb{B}^{n}}
\def\Btwo{\mathbb{B}^{2}}
\def \G {\Gamma}
\def \C {\mathbb{C}}
\def \R {\mathbb{R}}
\def \calf {\mathcal{F}}
\def \cala {\mathcal{A}}
\def \calo {\mathcal{O}}
\def \cals {\mathcal{S}}
\def\SU{\mathrm{SU}}
\DeclareMathOperator{\bk} {{\it{\mathcal{B}_{\Gamma}^{k}}}}
\DeclareMathOperator{\bkm} {{\it{\mathcal{B}_{M}^{\lambda\otimes\ell^{\otimes k}}}}}
\DeclareMathOperator{\ck}{{{\it{C}}_{\Gamma,{\it{k}}}}}
\DeclareMathOperator{\dhyp}{d_{\mathrm{hyp}}}
\DeclareMathOperator{\shyp}{\mu_{shyp}^{vol}}
\DeclareMathOperator{\hypbn}{\mu_{hyp}}
\DeclareMathOperator{\hypbnvol}{\mu_{hyp}^{vol}}
\DeclareMathOperator{\vx}{\mathrm{vol_{\mathrm{hyp}}}}
\DeclareMathOperator{\lk}{{\it{\overline{\square}_{k}}}}
\DeclareMathOperator{\hkm} {{\it{K_{M,\lambda\otimes\ell^{\otimes k}}}}}
\DeclareMathOperator{\rx}{{\it{r_{X_{\G}}}}}
\title[Estimates of Picard modular cusp forms]{Estimates of Picard modular cusp forms}
\author{Anilatmaja Aryasomayajula}
\address{Department of Mathematics, Indian Institute of Science Education and Research (IISER) Tirupati, 
Transit campus at Sri Rama Engineering College, Karkambadi Road,
Mangalam (B.O),Tirupati-517507, India.}
\email{anil.arya@iisertirupati.ac.in}
\author{Baskar Balasubramanyam}
\address{Department of Mathematics, Indian Institute of Science Education and Research (IISER) Pune,
Dr. Homi Bhabha Road, Pashan, Pune 411008, India.}
\email{baskar@iiserpune.ac.in}
\author{Dyuti Roy}
\address{Department of Mathematics, Indian Institute of Science Education and Research (IISER) Tirupati, 
Transit campus at Sri Rama Engineering College, Karkambadi Road,
Mangalam (B.O),Tirupati-517507, India.}
\email{dyutiroy@students.iisertirupati.ac.in}
\date{\today}
\subjclass[2010]{11F11, 11F12}
\keywords{Sup-norm bounds of cusp forms}
\begin{document}
\begin{abstract}
In this article, for $n\geq 2$, we compute asymptotic, qualitative, and quantitative estimates of the Bergman kernel of Picard modular cusp forms associated to torsion-free, cocompact subgroups of $\mathrm{SU}\big((n,1),\C\big)$. The main result of the article is the following result. Let $\G\subset \mathrm{SU}\big((2,1),\calo_{K}\big)$ be a torsion-free subgroup of finite index, where $K$ is a totally imaginary field. Let $\bk$ denote the Bergman kernel associated to the $\cals_{k}(\Gamma)$, complex vector space of weight-$k$ cusp forms with respect to $\G$. Let $\Btwo$ denote the $2$-dimensional complex ball endowed with the hyperbolic metric, and let $X_{\G}:=\G\backslash \Btwo$ denote the quotient space, which is a noncompact complex manifold of dimension $2$. Let $\big|\cdot\big|_{\mathrm{pet}}$ denote the point-wise Petersson norm on $\cals_{k}(\Gamma)$. Then, for $k\geq 6$, we have the following estimate
\begin{equation*}
\sup_{z\in X_{\G}}\big|\bk(z)\big|_{\mathrm{pet}}=O_{\G}\big(k^{\frac{5}{2}}\big),
\end{equation*}
where the implied constant depends only on $\G$.
\end{abstract}
\maketitle

\vspace{0.2cm}
\section{Introduction}
\subsection{History and background} 
Estimates of automorphic forms is a subject of great interest in recent times, owing to their applications both in arithmetic intersection theory, and arithmetic quantum chaos. Especially, estimates of the Bergman kernel associated to the vector bundle of automorphic forms is of arithmetic and geometric significance.   

\vspace{0.1cm}
Estimates of Bergman kernel associated to cusp forms defined over hyperbolic Riemann surfaces were derived in \cite{abbes} and \cite{jk}. Furthermore, off-diagonal estimates of these Bergman kernels were derived in \cite{anil1} and \cite{anil2}, which were useful  in deriving sub-convexity estimates of Hecke eigen cusp forms associated to certain cocompact Fuchsian subgroups arising from quaternion algebras in \cite{ab2}. 

\vspace{0.1cm}
In \cite{antareep}, optimal estimates of the Bergman kernel associated to Siegel modular cusp forms were derived in both the cocompact and cofinite setting. 

\vspace{0.1cm}
In this article, we extend methods from \cite{anil1}, \cite{anil2}, \cite{anil3}, and \cite{ab1}, to derive estimates of Picard modular cusp forms. For $n\geq 2$, we prove the asymptotic, qualitative, and quantitative estimates of the Bergman kernel associated to Picard modular cusp forms defined with respect to cocompact subgroups of $\SU\big((n,1),\C\big)$. 

\vspace{0.1cm}
Furthermore, we derive quantitative estimates of the Bergman kernel associated to Picard modular cusp forms defined with respect to finite index subgroups of  $\SU\big((2,1),\calo_{K}\big)$, where $\calo_{K}$ is the ring of integers of a totally imaginary quadratic field.    

\vspace{0.2cm}
\paragraph{\bf{Notation.}} For $n\geq 2$, let 
\begin{align*}
\Bn:=\big\lbrace z:=\big(z_{1},\dots,z_{n}\big)^{t}\in\C^{n}\big|\,\big|z\big|^{2}:=\big|z_{1}\big|^{2}+\cdots+\big|z_{n}\big|^{2}<1\big\rbrace
\end{align*}
denote the complex hyperbolic ball endowed with the hyperbolic metric $\hypbn$, and let $\Gamma \subset \SU\big((n,1),\C\big)$ be a discrete, torsion-free, cofinite subgroup, acting on $\Bn$ via fractional linear transformations. Let $X_{\G}=\Gamma \backslash \Bn$ denote the quotient space, which is an $n$-dimensional complex manifold of finite hyperbolic volume. 

\vspace{0.1cm}
For $k\geq 1$, let $\cals_{k}(\Gamma)$ denote the complex vector space of weight-$k$ cusp forms, and let $\lbrace f_{1},\ldots f_{d_{k}} \rbrace$ denote an orthonormal basis of $\cals_{k}(\Gamma)$ with respect to the Petersson inner-product. 

\vspace{0.1cm}
With notation as above, for $n\geq 2$, and $k\geq 1$, and $z,w\in\Bn$, the Bergman kernel is given by the following formula
\begin{align*}
\bk(z,w):=\sum_{j=1}^{d_{k}}f_{j}(z)\overline{f_{j}(w)},
\end{align*}
which is a holomorphic cusp form in the $z$-variable, and an anti-holomorphic cusp form in the $w$-variable. The definition of the Bergman kernel is independent of the choice of the orthonormal basis of $\cals_{k}(\Gamma)$. 

\vspace{0.1cm}
The Petersson norm of the Bergman kernel at any $z \in \Bn$ is given by the following formula
\begin{align*}
\big|\bk(z)\big|_{\mathrm{pet}}:= \big( 1- \big|z\big|^2\big)^{k}\; \big|\bk(z,z)\big|= \big( 1- \big|z\big|^2\big)^{k}\sum_{j=1}^{k}\big|f_{j}(z)\big|^{2}.
\end{align*}

\vspace{0.2cm} 
\subsection{Statement of main results}
We now state the first main result of the article, which is proved as Theorem \ref{mainthm1}.

\vspace{0.1cm}
\textbf{Theorem 1.}
With notation as above, for $n\geq 2$, let $\G\subset  \SU\big((n,1),\C\big)$ be a discrete, torsion-free, cocompact subgroup. Then, for $z\in X_{\G}$,  we have 
\begin{align*}
\lim_{k\rightarrow \infty}\frac{1}{k^n}\big|\bk(z)\big|_{\mathrm{pet}}=  \frac{1}{(4\pi)^{n}},
\end{align*}
and the convergence of the above limit is uniform in $z\in X$.

\vspace{0.1cm}
The second main result of the article is the following theorem, which is proved as Corollary \ref{cor3}.

\vspace{0.1cm}
\textbf{Theorem 2.}
With notation as above, for $n\geq 2$, let $\G\subset  \SU\big((n,1),\C\big)$ be a discrete, torsion-free, cocompact subgroup. Then, for $k\geq 2n+2$, we have the following estimate
\begin{align}\label{eq:theorem2}
\sup_{z\in X_{\G}}\big|\bk(z)\big|_{\mathrm{pet}}=O_{\G}\big(k^{n}\big),
\end{align}
where the implied constant depends only on $\G$. 

\vspace{0.1cm}
The third result of the article is the following theorem, which is proved as Corollary \ref{cor4}.

\vspace{0.1cm}
\textbf{Theorem 3.}
With notation as above, let $\Gamma \subset\mathrm{SU}((2,1),\calo_K) $ be a torsion-free subgroup of finite index, where $K$ is a totally imaginary quadratic field. Furthermore, we assume that $\G$ admits only one cusp at $\infty$. Then, for $k\geq 6$, we have the following estimate
\begin{align}\label{eq:theorem3}
\sup_{z\in X_{\G}}\big|\bk(z)\big|_{\mathrm{pet}}=O_{\G}\big(k^{5\slash 2}\big), 
\end{align}
where the implied constant depends only on $\G$. 

\vspace{0.1cm}
\begin{rem}
The implied constants in estimates \eqref{eq:theorem2} and \eqref{eq:theorem3}, can be explicitly computed, and also remain stable in covers, which we demonstrate in Remark \ref{rem:covers}.

\vspace{0.1cm}
The assumption that $\G$ admits only one cusp at $\infty$ in Theorem 3, is only for notational convenience, and estimate \eqref{eq:theorem3} easily extends to cofinite subgroups with multiple cusps. Furthermore, torsion points should not be much of an impediment for extending estimate \eqref{eq:theorem3} to subgroups with torsion points.  

\vspace{0.1cm}
Lastly, estimate \eqref{eq:theorem3} can be easily extended to cofinite subgroups, which are comensurable with $\mathrm{SU}((2,1),\calo_K)$. 
\end{rem}

\vspace{0.1cm}
\begin{rem}
Let $\G\subset \mathrm{SU}\big((n,1),\C\big)$  be a discrete, cofinite subgroup. Following results from \cite{jk}, \cite{ab1} and \cite{antareep}, the following estimate  is expected, which has now assumed the status of a folk-lore conjecture
\begin{align*}
\sup_{z\in X_{\G}}\big|\bk(z)\big|_{\mathrm{pet}}=O_{\G}\big(k^{3n\slash 2}\big).
\end{align*}
However, estimate \eqref{eq:theorem3} suggests that we can expect a finer estimate than the above estimate for semi-simple Lie groups acting on symmetric spaces of rank $1$. 
\end{rem}

\vspace{0.2cm}
\section{Background material}\label{Background material}
We now set up the notation, which we use for the rest of the article and recall the background material for the proofs of the main results. 

\vspace{0.25cm} 
\subsection{The ball model}\label{ballmodel}
For $n\geq 2$, let  $H$  be a Hermitian matrix of signature $(n,1)$, and let $ \big(\mathbb{C}^{n+1},H\big)$ denote the Hermitian inner-product space $\C^{n+1}$ equipped with the Hermitian inner-product $\langle\cdot, \cdot\rangle_{H}$, induced by $H$. For $z,w \in \mathbb{C}^{n+1}$, the Hermitian inner-product $\langle\cdot, \cdot\rangle_{H}$ is defined as 
\begin{align*}
\langle z, w \rangle_H = w^{*}H z,
\end{align*}
where $w^{*}$ denotes the complex transpose of $w$.

\vspace{0.1cm}
Set
\begin{align*}
V_{-}(H):= \big\lbrace z\in\mathbb{C}^{n+1}\big|\,\langle z, z \rangle_H < 0 \big\rbrace\subset \C^{n+1};\,\,
 V_0(H):= \big\lbrace z\in\mathbb{C}^{n+1}\big|\,\langle z, z \rangle_H = 0 \big\rbrace\subset \C^{n+1}.
\end{align*}

Set 
\begin{align*}
\cala^{n+1}:=\big\lbrace \big(z_{1},\ldots,z_{n+1}\big)^{t}\in \C^{n+1}\big|\,z_{n+1}\not=0\rbrace\subset \C^{n+1}.
\end{align*}

The subspace $\cala^{n+1}$ inherits the structure of a Hermitian inner-product space, which we denote by $\big(\cala^{n+1},H\big)$. We have the following map
\begin{align*}
&\phi:\cala_{n+1} \longrightarrow \C^{n}\\&
\big(z_1,\ldots,z_{n+1}\big)^{t}\mapsto \bigg( \frac{z_1}{z_{n+1}},  \cdots ,\frac{z_n}{z_{n+1}}\bigg)^{t}.  
\end{align*}                    

The complex hyperbolic space  $\mathcal{H}^{n}\left(\mathbb{C}\right)$  and its boundary $\partial \mathcal{H}^{n}\left(\mathbb{C}\right)$ are defined as 
\begin{align*}
\mathcal{H}^{n}\left(\mathbb{C}\right):=\phi\big( V_{-}(H)\big)\subset \C^{n};\,\,\,\partial \mathcal{H}^{n}\left(\mathbb{C}\right):=\phi\big( V_{0}(H)\big)\subset\C^{n}.
\end{align*}

\vspace{0.1cm}
The choice of Hermitian matrix gives different models of hyperbolic $n$-space. For
\begin{align}\label{defn:H}
H = \begin{pmatrix}
\mathrm{Id}_{n } & 0 \\
0  & -1
\end{pmatrix},
\end{align}
we obtain the unit ball model of the complex hyperbolic space, where $\mathrm{Id}_{n}$ denotes the identity matrix of order $n$, which we now describe.

\vspace{0.1cm}
For any $z= \big(z_1,\ldots ,z_n\big)^{t} \in \mathcal{H}^n(\C)$, consider the lift $\tilde{z}:=\big(z_1,\ldots ,z_n,1\big)^{t}\in\cala^{n+1}$. Observe that
\begin{align*}
\langle \tilde{z},\tilde{z}\rangle_{H} < 0
\iff  |z_1|^2 +\cdots +|z_n|^2 <1.
\end{align*}

Thus $\mathcal{H}^n(\C)$ can be identified with the open unit ball 
\begin{align*}
\mathbb{B}^n := \big\lbrace z:=\big(z_{1},\ldots,z_{n}\big)^{t}\in \C^{n}\big|\, \big|z\big|^{2}:=\big|z_1\big|^2+ \cdots + \big|z_n\big|^2 < 1 \big\rbrace.
\end{align*}

This model which is also known as the ball model, is the natural generalization of Poincare disk model of upper half-plane $\mathbb{H}$.
 
\vspace{0.1cm} 
The space is endowed with its usual K\"ahler-Bergman metric $\mu_{\mathrm{hyp}}$,  which has constant negative curvature $-1$. For any given $z=(z_{1},\ldots,z_{n})\in \mathbb{B}^n$, the hyperbolic metric $\mu_{\mathrm{hyp}}(z)$  is given by the following formula
\begin{align}\label{defnhypmetric}
\mu_{\mathrm{hyp}}(z):=-2i\partial\overline{\partial}\log(1-|z|^{2}),
\end{align}
and let $\mu_{\mathrm{hyp}}^{\mathrm{vol}}(z)$ denote the associated volume form. 

\vspace{0.1cm}
From standard results in hyperbolic geometry, for any $z=(z_1,\ldots,z_n)^{t},\,w=(w_1,\ldots,w_n)^{t}\in \mathbb{B}^n$ with respective lifts $\tilde{z}=(z_1,\ldots,z_n,1)^{t},\,\tilde{w}=(w_1,\ldots,w_n,1)^{t}\in \cala^{n+1}$, the hyperbolic distance in the ball model is given by the following relation
\begin{align}\label{coshreln}
&\cosh^{2}\big(\dhyp(z,w)\slash 2\big)=\frac{\langle \tilde{z},\tilde{w}\rangle_H\; \langle \tilde{w},\tilde{z} \rangle_H }{\langle \tilde{z},\tilde{z}\rangle_H\; \langle \tilde{w},\tilde{w}\rangle_H},\,\,\,\;\mathrm{where}\,\,\langle \tilde{z},\tilde{w} \rangle _H:=\tilde{w}^{\ast}H\tilde{z}=\sum_{j=1}^{n}z_j\overline{w}_{j}-1,
\end{align}
and $H$ is as described in equation \eqref{defn:H}.

\vspace{0.1cm} 
The unitary group $\mathrm{SU}\big((n,1),\C\big)$ is given by 
\begin{align*}
\mathrm{SU}\big((n,1),\C\big):= \{ A \in \mathrm{SL}\big(n,\C\big) \; | \, A^*H A =  H\},\, \,\mathrm{where} \,\,
H = \begin{pmatrix}
\mathrm{Id}_{n } & 0 \\
0  & -1
\end{pmatrix},
\end{align*}
and $\mathrm{Id}_{n}$ denotes the identity matrix of order $n$.

\vspace{0.1cm} 
The group $ \mathrm{SU}\big((n,1),\C\big)$ acts on $\mathbb{B}^n$ via bi-holomorphic mappings, which are given by fractional linear transformations, which we explain here. Let 
\begin{align*}
\gamma=\left(\begin{array}{cc} A&B\\C &D\end{array}\right)\in \mathrm{SU}\big((n,1),\C\big),
\end{align*}
where $A\in M_{n\times n}(\C)$, $B\in M_{n\times 1}(\C)$, $C\in M_{1\times n}(\C) $, and $D\in\C$. For $z=(z_1,\ldots,z_n)^{t}\in\Bn$, the action of $\gamma$ on $\mathbb{B}^n$, is given by 
\begin{align}\label{gammact}
\gamma z:=\frac{Az+B}{Cz+D}.
\end{align}
Here, $z$ is seen as a column vector vector in $\C^n$. 

\vspace{0.1cm} 
Let $\Gamma\subset \mathrm{SU}\big((n,1),\C\big)$ be a discrete, torision-free, cofinite subgroup, and let $X_{\G}:=\Gamma\backslash\mathbb{B}^n$ denote the quotient space, which  is called a Picard modular variety of dimension $n$. The Picard modular variety $X_{\G}$ admits the structure of a complex manifold of dimension $n$.  The hyperbolic metric $\hypbn(z)$ is $\mathrm{SU}\big((n,1),\C\big)$-invariant, and hence, defines a K\"ahler metric on $X_{\G}$.

\vspace{0.1cm} 
When $\G$ is a cocompact subgroup, the Picard modular variety $X$ is a compact complex manifold of dimension $n$. When $\G$ is cofinite, then the Picard modular variety $X_{\G}$  is a non-compact complex manifold of dimension $n$, of finite hyperbolic volume. 

\vspace{0.25cm} 
\subsection{Three models of $\mathcal{H}^{2}(\C)$ }\label{noncompactmodel}
For $n=2$, we use  different models of the hyperbolic space, which we now describe in detail.

\textbf{Model (1)}: As in Section \eqref{ballmodel}, the ball model 
\begin{align*}
\mathbb{B}^2:=\big\lbrace z=(z_1,z_2)^{t} \in \mathbb C^2 \big|\,\big|z\big|^2:= \big|z_1 \big| ^2 + \big|z_2 \big|^2 < 1 \big\rbrace
 \end{align*}
is obtained by setting 
\begin{align*}
H={\begin{pmatrix} 1 & 0 & 0\\ 0 & 1 & 0 \\ 0& 0 & -1   \end{pmatrix}}.
\end{align*}
         
\textbf{{Model (2)}}: For $z=(z_1,z_2,z_3)^t$ and $w=(w_1,w_2,w_3)^t \in \mathbb{C}^{3}$, consider the Hermitian form
\begin{align} \label{innerpro2}
\langle z, w \rangle_2 = iz_1 \overline{w_3}+ z_2 \overline{w_2}-i z_3 \overline{w_1}, 
\end{align}

which is determined by the following Hermitian matrix of signature $(2,1)$
\begin{align*}
H_2={\begin{pmatrix} 0 & 0 & -i\\ 0 & 1 & 0 \\ i & 0 & 0   \end{pmatrix}}.
\end{align*}

\vspace{0.1cm}
For any $z= \big(z_1,z_2\big)^{t}\in \mathcal{H}^2(\C)$, with the lift $\tilde{z}:=\big(z_1,z_2,1\big)^{t}\in\cala^{3}$, observe that
\begin{align*}
\langle \tilde{z},\tilde{z}\rangle_{2} < 0\iff 2\mathrm{Im}(z_1) - \big|z_{2} \big|^2>0.
\end{align*}

Thus $\mathcal{H}^2(\C)$ can be identified with the following space 
\begin{align*}
\mathbb{B}^2_2:=\big\lbrace z=(z_1,z_2)^{t} \in \mathbb \C^2 \big|\, 2\mathrm{Im}(z_1) - \big| z_{2} \big|^2  > 0 \big\rbrace.
\end{align*}

\textbf{{Model (3)}}:  For $z=(z_1,z_2,z_3)^t$ and $w=(w_1,w_2,w_3)^t \in \mathbb{C}^{3}$, consider the Hermitian form
\begin{align} \label{innerpro3}
\langle z, w \rangle_3 = z_1 \overline{w_3}+ z_2 \overline{w_2} + z_3 \overline{w_1}, 
\end{align}

which is determined by the following Hermitian matrix of signature $(2,1)$
\begin{align*}
H_3={\begin{pmatrix} 0 & 0 & 1\\ 0 & 1 & 0 \\ 1 & 0 & 0   \end{pmatrix}}.
\end{align*}

\vspace{0.1cm}
For any $z= \big(z_1,z_2\big)^{t}\in \mathcal{H}^2(\C)$ with the lift $\tilde{z}:=\big(z_1,z_2,1\big)^{t}\in\cala^{3}$, observe that
\begin{align*}
\langle \tilde{z},\tilde{z}\rangle_{3} < 0\iff 2 \mathrm{Re}(z_1) + \big|z_{2} \big| ^2  < 0.
\end{align*}

Thus $\mathcal{H}^2(\C)$ can be identified with the following space  
\begin{align*}
\mathbb{B}^2_3:=\big\lbrace z=(z_1,z_2)^{t} \in  \C^2 \big|\, 2\mathrm{Re}(z_1) + \big| z_{2}\big|^2  < 0 \big\rbrace.
\end{align*}
 
These are the three models which we work with, to prove estimate \eqref{eq:theorem3}. We treat the ball model as the base model, and we go back and forth between the models as per our convenience.

\vspace{0.1cm}
The Cayley transformation
\begin{align}\label{cayley13}
\gamma_{3} := {\begin{pmatrix} 1 & 1 & 0 \\ 0 & 1 & -1 \\ 1 & 1 & -1 \end{pmatrix}}:\big( \cala^{3},H\big)\longrightarrow :\big( \cala^{3},H_3\big)
\end{align}
defines an isometry. 
 
\vspace{0.1cm}  
The Cayley transformation
\begin{align}\label{cayley23}
\gamma_{23}: = {\begin{pmatrix} i & 0 & 0 \\ 0 & 1 & 0 \\ 0 & 0 & 1 \end{pmatrix}}:\big( \cala^{3},H_2\big)\longrightarrow \big( \cala^{3},H_3\big)
\end{align}
defines an isometry.

\vspace{0.1cm}
From the above Cayley transformations, it is easy to see that the map 
\begin{align}\label{cayley}
\gamma_{2}:= \gamma_{23}^{-1}\circ \gamma_3:\big( \cala^{3},H\big)\longrightarrow \big( \cala^{3},H_2\big)
\end{align}
defines an isometry.

\vspace{0.1cm}
Let $\G\subset \mathrm{SU}\big((2,1),\mathcal{O}_{K}\big)$ be a torsion-free, finite index subgroup, with only one cusp at $\infty$. As discussed in Section \eqref{ballmodel}, the quotient space $X_{\G}:=\G\backslash \mathbb{B}^2$ admits the structure of a noncompact complex manifold of dimension $2$, of finite hyperbolic volume. 

\vspace{0.1cm}
The Picard modular variety $X_{\G}$ is isometric to $\G^2\backslash \mathbb{B}^2_2$ and $\G^3\backslash \mathbb{B}^2_3$, where $\G_2:=\gamma_2\G\gamma_2^{-1}$ and 
$\G_3:=\gamma_3\G\gamma_3^{-1}$.

\vspace{0.1cm}
Let $\Gamma_{\infty}$ denote the stabilizer of the cusp $\infty.$ Let $\Gamma^2_{\infty}$ and $\Gamma^3_{\infty}$ be the corresponding stabilizer group in the domains $\mathbb{B}^2_2$ and $\mathbb{B}^2_3$, respectively.  Furthermore, we have following relation
\begin{align}\label{gammainfty}
\Gamma^3_{\infty} = \gamma_{3} \Gamma^1_{\infty}\gamma_{3}^{-1}= \gamma_{23} \Gamma^2_{\infty}\gamma_{23}^{-1};
\end{align}
where $\gamma_{13}$ and $\gamma_{23}$ are described in the equations \eqref{cayley13} and \eqref{cayley23}, respectively.

\vspace{0.1cm}
As discussed in \cite{holzapfel}, the stabilizer subgroup of $\infty$ in the group $\gamma_2\G_{0}\gamma_2^{-1}$, is explicitly described as
\begin{align}\label{B2stabilizer}
\Gamma^2_{0,\infty} = \left\{ \begin{pmatrix} 
1 & i\overline{\alpha} & i \frac{|\alpha|^2}{2} +\beta \\0 & 1& \alpha \\0 & 0 & 1              
\end{pmatrix} \middle| \hspace{.2cm} \alpha\in \mathbb{C}, \beta \in \mathbb{R} \right\},
\end{align}
where $\G_{0}:=\mathrm{SU}\big((2,1),\calo_{K}\big)$, where $K$ is a totally imaginary quadratic field. 
 
Combining equations \eqref{gammainfty} and \eqref{B2stabilizer}, the stabilizer subgroup of $\infty$ in the group $\gamma_3\G_{0}\gamma_3^{-1}$, is explicitly described as
\begin{align}\label{B3stabilizer}
\Gamma^3_{0,\infty} = \left\{ \begin{pmatrix} 
1 & -\overline{\alpha} & - \frac{|\alpha|^2}{2} + i\beta \\0 &1 & \alpha\\ 0 & 0 & 1              
\end{pmatrix} \middle| \hspace{.2cm} \alpha \in \mathbb{C}, \beta \in \mathbb{R} \right\}.
 \end{align}

\vspace{0.2cm}
\subsection{Picard modular cusp forms and the Bergman kernel}\label{picardcuspforms}
With hypothesis as above, for $n\geq 2$, let $\G\subset \mathrm{SU}\big((n,1),\C\big)$ be a discrete, torsion-free, cofinite subgroup, and let $X_{\G}=\G\backslash \mathbb{B}^n$ denote the quotient space. For any $z,w\in X_{\G}$, let $\dhyp(z,w)$  denote the geodesic distance between the points $z$ and $w$ on the Picard modular variety $X_{\G}$,  i.e., $\dhyp$ is the natural distance function associated to the hyperbolic metric $\hypbn$ on $X_{\G}$.  

\vspace{0.1cm}
Locally, we identify $X_{\G}$ with its universal cover $\mathbb{B}^{n}$. For $z,w\in X_{\G}$, the hyperbolic distance function $\dhyp(z,w)$ satisfies equation \eqref{coshreln}.

\vspace{0.1cm}
Let $\vx(X_{\G})$ denote the volume of $X_{\G}$ with respect to the hyperbolic volume form  $\hypbnvol(z)$, and let 
\begin{align*}
\shyp:=\frac{\hypbnvol}{\vx\big(X_{\G}\big)},
\end{align*}
denote the rescaled hyperbolic volume form, which measures the volume of $X_{\G}$ to be one. 

\vspace{0.1cm}
When $\G$ is cocompact, the injectivity radius of $X_{\G}$ is given by the following formula 
\begin{align}\label{ircom}
\rx := \inf \big\lbrace \dhyp(z,\gamma z)|\,z \in X_{\G}, \gamma\in\Gamma\backslash\mathrm{Id}\big\rbrace, 
\end{align}
where $\mathrm{Id}$ is the identity matrix.

\vspace{0.1cm}
When $\G\subset \mathrm{SU}\big((2,1),\calo_{K}\big)$ is a torsion-free, finite index subgroup with only one cusp at $\infty$, the injectivity radius of $X_{\G}$ is given by the following formula
\begin{align}\label{irnoncomp}
\rx := \inf \big\lbrace \dhyp(z,\gamma z)|\,z \in X_{\G}, \gamma\in\Gamma\backslash\Gamma_{\infty}\big\rbrace,
\end{align}
where $\Gamma_{\infty}$ is the stabilizer of the cusp $\infty$, which is as defined in Section \ref{noncompactmodel}.

\vspace{0.2cm}
\paragraph{\bf{Picard modular cusp forms.}} For $n\geq 2$, let $\G\subset \mathrm{SU}\big((n,1),\C\big)$ be a discrete, torsion-free, cocompact subgroup. For $k\geq 1$, a holomorphic function $f: \Bn \to \C$ is said to be Picard modular cusp form of weight-$k$ with respect to $\G$, if for any $\gamma \in \Gamma$ and $z \in \Bn$, $f$ satisfies the following transformation property:
\begin{align}\label{cuspform:defn}
f\big( \gamma z\big) =\left({Cz+D}\right)^{k} f(z),\,\,\,
\mathrm{where}\,\,
 \gamma=\bigg(\begin{array}{cc} A&B\\C &D\end{array}\bigg) \in \Gamma.
\end{align}
If $\G$ is cofinite, then $f$ has to satisfy the additional condition that $f$ vanishes at all cusps of $\G$. 

\vspace{0.1cm}
So, if $\G\subset \mathrm{SU}\big((n,1),\calo_{K}\big)$ is a torsion-free, finite index subgroup with only one cusp at $\infty$, then for $f$ to be a Picard modular cusp form of weight-$k$ with respect to $\G$, $f$ has to satisfy the transformation property \eqref{cuspform:defn}, and the condition that $f(\infty)=0$. 

\vspace{0.1cm}
The complex vector-space of weight-$k$ Picard modular cusp forms is denoted by $\cals_{k}(\Gamma)$.

\vspace{0.1cm}
For any $f\in \cals_{k}(\Gamma)$, the Petersson norm at a point $z\in X$ is given by the following formula:
\begin{align}
\big|f(z)\big|_{\mathrm{pet}}^{2}:= \big( 1- \big|z\big|^2\big)^{k}\big|f(z)\big|^{2}.
\end{align}

The Petersson norm induces an $L^2$-metric on $\cals_{k}(\Gamma)$, which we denote by $\langle\cdot,\cdot\rangle_{\mathrm{pet}}$, and is known as the Petersson inner-product. For any $f,g\in\cals_{k}(\G)$, the Petersson inner-product is given by the following formula
\begin{align*}
\langle f, g\rangle_{\mathrm{pet}}:=\int_{\calf_{\G}}\big( 1- \big|z\big|^2\big)^{k}f(z)\overline{g(z)}\hypbnvol(z),
\end{align*}
where $\calf$ denotes a fundamental domain of $X_{\G}$

\vspace{0.1cm}
When $\G$ is cocompact, Picard modular cusp forms can be realized as the global section of a line bundle. Let $\mathcal{L}$ denote the line bundle, whose sections are Picard modular cusp forms of weight-$1$. Then, for $k\geq 1$, we have
\begin{align*}
H^{0}\big(X_{\G}, \mathcal{L}^{\otimes k}\big)=\cals_{k}(\Gamma).
\end{align*}

The Petersson norm and the Petersson inner-product can be realized as the point-wise norm and $L^{2}$ metric on $H^{0}\big(X_{\G},\mathcal{L}^{\otimes k}\big)$.

\vspace{0.2cm}
\paragraph{\bf{Bergman kernel.}}
Let $d_{k}$ denote the dimension of the vector-space $\cals_{k}(\Gamma)$, and let $\lbrace f_{1},\ldots f_{d_{k}} \rbrace$ denote an orthonormal basis of $\cals_{k}(\Gamma)$ with respect to the Petersson inner-product. Then, for any $z,w\in \Bn$, the Bergman kernel $\bk(z)$ associated to the vector  space $\cals_{k}(\Gamma)$ is given by the following formula:
\begin{align}\label{bkdefn1}
\bk(z,w):=\sum_{j=1}^{d_{k}}f_{j}(z)\overline{f_{j}(w)},
\end{align}
and when $z=w$, for brevity of notation, we denote $\bk(z,z)$ by $\bk(z)$.  

\vspace{0.1cm}
For a fixed $w\in \Bn$, the Bergman kernel is a holomorphic cusp form of weight-$k$ in  $z$. Similarly, for a fixed $z\in \Bn$, the Bergman kernel is a anti-holomorphic cusp form of weight-$k$ in $w$. The Petersson norm induces the following norm on the Bergman kernel at any $z,w\in \Bn$ with respective lifts $\tilde{z}, \tilde{w}\in\cala^{n+1}$ is given by the following formula
\begin{align*}
\big|\bk(z,w)\big|_{\mathrm{pet}}:=\big(-\big\langle \tilde{z},\tilde{z} \big\rangle_{H}\big)^{k\slash 2}\cdot\big(-\big\langle \tilde{w},\tilde{w} \big\rangle_{H}\big)^{k\slash 2}=\big(1- \big|z\big|^2 \big)^{k\slash2}\cdot \big(1- \big|w\big|^2 \big)^{k\slash2}\cdot\big|\bk(z,w)\big|.
\end{align*}

Now we introduce an alternate expression for the Bergman kernel associated to $\cals_{k}(\Gamma)$, the complex vector-space of Picard cusp forms of the ball domain $\mathbb{B}_n$. For any $k\geq 2n+1$, and $z=(z_1,\ldots,z_n)^{t},\,w=(w_1,\ldots, w_n)^{t}\in \mathbb{B}_n$, the Bergman kernel $\bk(z,w)$ associated to $\cals_{k}(\Gamma)$ has an alternate expression, which is given by the following formula
\begin{align}
&\bk(z,w):=\sum_{\gamma\in\Gamma}\frac{\ck}{\big|\langle z,\gamma w\rangle\big|^{k}(Cw+D)^{k}},\notag\\[0.1cm]
&\mathrm{where} \,\, \gamma=\left(\begin{array}{cc}A&B\\C&D \end{array}\right),\label{bkdefn2}\,\,\mathrm{and}\,\,\langle z, w\rangle:=1-\sum_{j=1}^{n}z_{j}\overline{w}_{j}, 
\end{align}
and $\ck$ is a constant, which satisfies the following estimate
\begin{align}\label{estimateck}
\ck=O_{\G}(k^n).
\end{align}
The implied constant in the above estimate is a constant, which depends only on $\G$, and can be computed explicitly by combining the two definitions of the Bergman kernel, namely equations \eqref{bkdefn1} and \eqref{bkdefn2}, and using the Riemann-Roch theorem.

\vspace{0.1cm}
From \eqref{bkdefn2}, the Petersson norm at any $z\in X_{\G}$ is given by 
\begin{align}\label{eqn1proof1}
\big|\bk(z)\big|_{\mathrm{pet}}=\big(1-\big|z\big|^{2}\big)^{k}\cdot\big|\bk(z)\big|\leq\sum_{\gamma\in\Gamma}\frac{\ck\big(1-\big|z\big|^{2}\big)^{k\slash 2}}{\big|1-\langle z,\gamma z\rangle\big|^{k}}\cdot\frac{\big(1-\big| z\big|^{2}\big)^{k\slash 2}}{\big|Cz+D\big|^{k}}.
\end{align}

For any $\gamma=\left(\begin{array}{cc}A&B\\C&D\end{array}\right)\in\Gamma$, and $z\in X_{\G}$, we have
\begin{align*}
\big(1-\big|\gamma z\big|^{2}\big)=-\big\langle \tilde{\gamma z},\tilde{\gamma z}\big\rangle_{H}=\frac{\big(1-\big|z\big|^{2}\big)}{\big|Cz+D\big|^{2}},
\end{align*}
where $\tilde{\gamma z}$ denotes the lift of $\gamma z$ to $\cala^{n+1}$.

\vspace{0.1cm}
Using the above relation, and combining equations \eqref{coshreln} and \eqref{eqn1proof1}, we derive
\begin{align}\label{coshyprel}
\big|\bk(z)\big|_{\mathrm{pet}}\leq\sum_{\gamma\in\Gamma}\frac{\ck}{\cosh^{k}\big(\dhyp(z,\gamma z)\slash 2\big)}.
\end{align}

\vspace{0.2cm}
\paragraph{\textbf{Picard modular cusp forms on $\mathcal{H}^{2}(\C)$.}}
We now describe the hyperbolic distance,  and Petersson norm of Picard modular cusp forms, in Model (3) of the hyperbolic space $\mathcal{H}^{2}(\C)$. 

\vspace{0.1cm}
For any $z=(z_1,z_2)^{t},w=(w_1,w_2)^t\in\mathbb{B}^2_3$ with respective lifts $\tilde{z}=(z_1,z_2,1)^{t},\tilde{w}=(w_1,w_2,1)\in\cala^{3}$, the hyperbolic distance is given by the following formula
\begin{align}\label{hyp-dist2}
\cosh^{2}\big(\dhyp(z,w)\slash 2\big)=\frac{\langle \tilde{z},\tilde{w}\rangle_3\langle \tilde{w},\tilde{z} \rangle_3}{\langle \tilde{z},\tilde{z}\rangle_3 \langle \tilde{w},\tilde{w} \rangle_3},
\end{align}
where $\langle \cdot,\cdot\rangle_3$ is as given by equation \eqref{innerpro3}. 

\vspace{0.2cm}
For any $k\geq 1$ and $f\in\cals_{k}(\G)$, at any $z\in \mathbb{B}^2_3$ with lift $\tilde{z}\in\cala^{3}$, the Petersson norm is given by the following formula
\begin{align}\label{pet-norm3}
\big|f(z)\big|_{\mathrm{pet}}^{2}:=\big(-\langle \tilde{z},\tilde{z}\rangle_3\big)^{k}\cdot\big|f(z)\big|^{2}=\big(-2 \mathrm{Re}(z_1) - \big|z_{2} \big| ^2\big)^{k}\big|f(z)\big|^{2}.
\end{align}
\section{Asymptotic estimates of the Bergman kernel}\label{asymptoticestimate}
In this section, we state the relevant results from geometric analysis, that are required in proving Theorems \ref{mainthm1} and \ref{mainthm2}, and Corollary \ref{asympcor}. We repeat the exposition from \cite{anil1} and \cite{ab1} for the benefit of the reader. 

\vspace{0.2cm}
\paragraph{\bf{Asymptotic estimates.}} 
Let $(M,\omega)$ be a compact complex manifold of dimension $n$ with natural Hermitian metric $\omega$. Let $\lambda$ and $\ell$, be a vector bundle of rank $r$ and a positive Hermitian holomorphic line bundle on $M$, respectively. Let $\|\cdot\|_{\lambda}$ and $\|\cdot\|_{\ell}$ denote the Hermitian metrics on and $\lambda$ and  $\ell$, respectively.  Furthermore, for any section $s$ of the line bundle $\ell$, the Hermitian metric $\|\cdot\|_{\ell}$ is given by the following formula $\|s(z)\|_{\ell}^{2}:=e^{-\phi(z)}|s(z)|^{2}$, where $\phi(z)$ is a real-valued function.

\vspace{0.1cm}
For any $k\geq 0$, let $\lk:=(\overline{\partial}^{\ast}+\overline{\partial})^{2}$ denote the $\overline{\partial}$-Laplacian acting on smooth sections of  $\lambda\otimes\ell^{\otimes k}$. Let $\hkm(t;z,w)$ denote the smooth kernel of the operator $e^{-\frac{2t}{k}\lk}$. We refer the reader to p.\,2 in \cite{bouche}, for the details regarding the properties which uniquely characterize the heat kernel $\hkm(t;z,w)$. When $z=w \in M$, the heat kernel $\hkm(t;z,z)$ admits the following spectral expansion:
\begin{align}\label{spectralexpn}
\hkm(t;z,z)=\sum_{n\geq 0} e^{-\frac{2t}{k}\alpha_{n}^{k}}\|\varphi_{n}(z)\|_{\lambda\otimes\ell^{\otimes k}}^{2},
\end{align}
where $\lbrace\alpha_{n}^k\rbrace_{n\geq0}$ denotes a set of eigenvalues of $\lk$ 
(counted with multiplicities), and $\lbrace\varphi_{n}\rbrace_{n\geq0}$ denotes the set of associated orthonormal eigenfunctions. 

\vspace{0.1cm}
Let $H^{0}(M,\lambda\otimes\ell^{\otimes k})$ denote the vector space of global holomorphic sections of the vector bundle $\lambda\otimes\ell^{\otimes k}$, and let  $\lbrace s_{i}\rbrace$ denote an orthonormal basis of $H^{0}(M,\lambda\otimes\ell^{\otimes k})$. For any $z\in M$, the following function is called the Bergman kernel associated 
to the vector bundle $\lambda\otimes\ell^{\otimes k}$
\begin{align}\label{bkdefn}
\bkm(z):= \sum_{i}\| s_{i}(z)\|_{\lambda\otimes\ell^{\otimes k}}^{2}.
\end{align}

The above definition is independent of the choice of orthonormal basis of $H^{0}(M,\lambda\otimes\ell^{\otimes k})$. 

\vspace{0.1cm}
It is easy to show that $\mathrm{ker}(\lk)=H^{0}(M,\lambda\otimes\ell^{\otimes k})$. Using which, and combining it with the spectral expansion of the heat kernel $\hkm(t;z,w)$ described in equation \eqref{spectralexpn}, it is easy to see that
\begin{align}\label{hkbkreln}
\lim_{t\rightarrow\infty} \hkm(t;z,z)=\bkm(z).
\end{align}
Let 
\begin{align}\label{curvatureform}
c_{1}(\ell)(z):=\frac{i}{2\pi}\partial\overline{\partial}\phi(z)
\end{align}
denote the curvature form of the line bundle $\ell$ at the point $z\in M$. Let $\alpha_{1},\ldots,\alpha_{n}$ denote the eigenvalues of $\partial\overline{\partial}\phi(z)$ at the point $z\in M$. Then, with notation as above, it follows from \cite[Th.\,1.1]{bouche} that, for any $z\in M$ and $t\in (0,k^{\varepsilon})$, for a given $\varepsilon>0$ not depending on $k$, we have
\begin{align}\label{boucheeqn1}
\lim_{k\rightarrow\infty}\frac{1}{k^{n}}\hkm(t;z,z)=r\cdot\prod_{j=1}^{n}\frac{\alpha_{j}}{(4\pi)^{n}\sinh(\alpha_{j}t)}, 
\end{align}
where $r$ denotes the rank of the vector bundle $\lambda$, and the convergence of the above limit is uniform in $z$. 

\vspace{0.1cm}
Using equations \eqref{hkbkreln} and \eqref{boucheeqn1}, in \cite[Th.\,2.1]{bouche}, Bouche derives the following equality
\begin{align}\label{boucheeqn2}
\lim_{k\rightarrow\infty}\frac{1}{k^{n}}\bkm(z)=r\cdot \big|\mathrm{det}_{\omega}\big(c_{1}(\ell)(z)\big)\big|,
\end{align}
and the convergence of the above limit is uniform in $z\in M$. 

\vspace{0.1cm}
For $n\geq 2$, let $\Gamma\subset \mathrm{SU}\big((n,1),\C\big)$ be a discrete, torsion-free, cocompact subgroup. For $k\geq 1$, let $f$ be a Picard modular cusp form of weight-$k$. Then, $f$ is a global section of $\mathcal{L}^{\otimes k}$, where $\mathcal{L}$ is a holomorphic line bundle, whose sections are Picard modular cusp forms of weight-$1$. 

\vspace{0.1cm}
For $k\geq 1$ and $z\in X_{\G}$, from the definition of the Bergman kernel $\mathcal{B}_{X_{\G}}^{\mathcal{L}^{\otimes k}}(z)$ for the line bundle $\mathcal{L}^{\otimes k}$ from equation \eqref{bkdefn}, and from the discussion in Section \ref{picardcuspforms}, we have 
\begin{align}\label{remeqn2}
\mathcal{B}_{X_{\G}}^{\mathcal{L}^{\otimes k}}(z)=\big|\bk(z)\big|_{\mathrm{pet}}. 
\end{align}

Furthermore, for any $f\in H^{0}(X,\mathcal{L})$ and $z\in X$, the Hermitian metric $\|\cdot\|_{\mathcal{L}}$ at $z\in X_{\G}$, is given by the following formula
\begin{align}\label{metriconL}
\|f\|_{\mathcal{L}}^{2}(z):=\big(1-|z|^{2}\big)\big|f(z)\big|^{2}.
\end{align}

\vspace{0.1cm}
We now prove Theorem 1. 

\vspace{0.1cm}
\begin{thm}\label{mainthm1}
With notation be as above, for $n\geq2$, let $\Gamma\subset \mathrm{SU}\big((n,1),\C\big)$ be a discrete, torsion-free, cocompact subgroup. Then, for any $z\in X_{\G}$,  we have 
\begin{align}\label{eq1:mainthm1}
\lim_{k\rightarrow \infty}\frac{1}{k^n}\big|\bk(z)\big|_{\mathrm{pet}}=  \frac{1}{(4\pi)^{n}},
\end{align}
and the convergence of the above limit is uniform in $z\in X_{\G}$.
\end{thm}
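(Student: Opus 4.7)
\textbf{Proof proposal for Theorem \ref{mainthm1}.} The plan is to realize $\big|\bk(z)\big|_{\mathrm{pet}}$ as the Bergman kernel of a positive Hermitian holomorphic line bundle on the compact K\"ahler manifold $X_{\G}$, and then invoke Bouche's asymptotic formula \eqref{boucheeqn2} verbatim. In the notation of Section \ref{asymptoticestimate}, I would take $M = X_{\G}$ equipped with the K\"ahler metric $\omega := \hypbn$; take $\lambda$ to be the trivial line bundle, so that the rank is $r = 1$; and take $\ell := \mathcal{L}$, the line bundle of weight-$1$ Picard modular cusp forms carrying the Petersson metric prescribed by \eqref{metriconL}. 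Under this identification, equation \eqref{remeqn2} gives $\mathcal{B}_{X_{\G}}^{\mathcal{L}^{\otimes k}}(z) = \big|\bk(z)\big|_{\mathrm{pet}}$, so the left-hand side of \eqref{eq1:mainthm1} is exactly the quantity governed by \eqref{boucheeqn2}.

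The only content-specific step is to compute the Chern curvature of $\mathcal{L}$ and verify positivity. From \eqref{metriconL}, the Hermitian metric on $\mathcal{L}$ is $\|f\|_{\mathcal{L}}^{2}(z) = e^{-\phi(z)}|f(z)|^{2}$ with $\phi(z) = -\log(1-|z|^{2})$, so combining \eqref{curvatureform} with \eqref{defnhypmetric} yields
\[
c_{1}(\mathcal{L})(z) = \frac{i}{2\pi}\partial\overline{\partial}\phi(z) = -\frac{i}{2\pi}\partial\overline{\partial}\log\big(1-|z|^{2}\big) = \frac{1}{4\pi}\,\hypbn(z).
\]
In particular $c_{1}(\mathcal{L}) = \tfrac{1}{4\pi}\omega$ is a positive $(1,1)$-form, so $\mathcal{L}$ is a positive Hermitian line bundle and all hypotheses of Bouche's theorem are met. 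With respect to a $\hypbn$-orthonormal frame the eigenvalues $\alpha_{1},\ldots,\alpha_{n}$ of $\partial\overline{\partial}\phi(z)$ are all equal, and
\[
\big|\mathrm{det}_{\omega}\big(c_{1}(\mathcal{L})(z)\big)\big| = \Big(\tfrac{1}{4\pi}\Big)^{n},
\]
constant in $z \in X_{\G}$.

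Finally, I would feed this into \eqref{boucheeqn2} with $r = 1$ to obtain
\[
\lim_{k \to \infty}\frac{1}{k^{n}}\,\mathcal{B}_{X_{\G}}^{\mathcal{L}^{\otimes k}}(z) = \frac{1}{(4\pi)^{n}},
\]
and the uniformity in $z$ is already part of the conclusion of \eqref{boucheeqn2}. Applying \eqref{remeqn2} on the left then gives precisely \eqref{eq1:mainthm1}. I do not foresee a serious obstacle: the theorem is essentially a direct specialization of Bouche's asymptotic, once one identifies the cusp-form line bundle $\mathcal{L}$ with its Petersson metric and performs the elementary $\partial\overline{\partial}$ computation above. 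The only minor point requiring a line of justification is that compactness of $X_{\G}$ (the cocompact hypothesis) is exactly what is needed to place oneself in the setting of \cite{bouche}.
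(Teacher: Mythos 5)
Your proposal is correct and follows essentially the same route as the paper: identify $\big|\bk(z)\big|_{\mathrm{pet}}$ with $\mathcal{B}_{X_{\G}}^{\mathcal{L}^{\otimes k}}(z)$ via \eqref{remeqn2}, compute $c_{1}(\mathcal{L})=\tfrac{1}{4\pi}\hypbn$ from the Petersson metric \eqref{metriconL}, and apply Bouche's asymptotic \eqref{boucheeqn2} with $r=1$. Your explicit remarks on positivity of $\mathcal{L}$ and on compactness being the hypothesis that places you in the setting of \cite{bouche} are welcome additions that the paper leaves implicit.
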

\begin{proof}
From equation \eqref{remeqn2},  for any $n\geq 2$, and $k\geq 1$, and $z\in X_{\G}$, we have
\begin{align}\label{proofeqn0}
\big|\bk(z)\big|_{\mathrm{pet}}=\mathcal{B}_{X_{\G}}^{\mathcal{L}^{\otimes k}}(z).
\end{align}

Applying estimate \eqref{boucheeqn2} to the complex manifold $X$ with its natural Hermitian metric $\hypbn(z)$, and the line bundle $\mathcal{L}^{\otimes k}$, we deduce that 
\begin{align}\label{proofeqn1}
\lim_{k\rightarrow \infty}\frac{1}{k^{n}}\mathcal{B}_{X_{\G}}^{\mathcal{L}^{\otimes k}}(z)=\big|\mathrm{det}_{\hypbnvol}\big(c_{1}(\mathcal{L})(z)\big)\big|, 
\end{align}
and the convergence of the above limit is uniform in $z\in X$. Now from the definition of the Hermitian metric $\|\cdot\|_{\mathcal{L}}$ on the line bundle $\mathcal{L}$ given by equation \eqref{metriconL}, and the definition of the curvature-form given by equation \eqref{curvatureform}, and the definition of the hyperbolic metric $\hypbn(z)$ given by equation \eqref{defnhypmetric}, we have
\begin{align}\label{proofeqn2}
c_{1}(\mathcal{L})(z)=-\frac{i}{2\pi}\partial\overline{\partial}\log\big(1-|z|^{2}\big)=\frac{1}{4\pi}\hypbn(z),
\end{align}
which implies that 
\begin{align}\label{proofeqn3}
\big|\mathrm{det}_{\hypbnvol}\big(c_{1}(\mathcal{L})(z)\big)\big|=\frac{1}{(4\pi)^{n}}. 
\end{align}
Hence, combining equations \eqref{proofeqn0},  \eqref{proofeqn1}, \eqref{proofeqn2}, and \eqref{proofeqn3}, we have
\begin{align*}
\lim_{k\rightarrow \infty}\frac{1}{k^{n}}\big|\bk(z)\big|_{\mathrm{pet}}=\frac{1}{(4\pi)^{n}},
\end{align*}
and the convergence of the above limit is uniform in $z\in X_{\G}$. This completes the proof of the theorem. 
\end{proof}

\vspace{0.1cm}
As an immediate consequence of Theorem \ref{mainthm1}, we prove the following Corollary, which can be thought of as an average version of Arithmetic Quantum Unique Ergodicity. 

\vspace{0.1cm}
\begin{cor}\label{asympcor}
With notation be as above, for $n\geq 2$, let $\Gamma\subset \mathrm{SU}\big((n,1),\C\big)$ be a discrete, torsion-free, cocompact subgroup. Then, for any  $z\in X_{\G}$, we have
\begin{align*}
 \lim_{k\rightarrow \infty}\frac{1}{d_{k}}\big|\bk(z)\big|_{\mathrm{pet}}\hypbnvol(z)=\shyp(z), 
\end{align*}
and the convergence of the above limit is uniform in $z\in X_{\G}$.
\begin{proof}
From equation \eqref{eq1:mainthm1} in Theorem \ref{mainthm1}, for $n\geq 2$, and $z\in X_{\G}$, we have
\begin{align}\label{cor1eqn1}
\lim_{k\rightarrow \infty}\frac{1}{d_{k}}\big|\bk(z)\big|_{\mathrm{pet}}\hypbnvol(z) &=\lim_{k\rightarrow \infty}\frac{k^n}{d_{k}}\cdot\lim_{k\rightarrow \infty}\frac{1}{k^n}
\big|\bk(z)\big|_{\mathrm{pet}}\hypbnvol(z)\notag\\ 
&= \lim_{k\rightarrow \infty}\frac{k^n}{d_{k}}\cdot\frac{1}{(4\pi)^{n}}\cdot \hypbnvol(z).
\end{align}
Observe that 
\begin{align*}
d_{k}=\int_{X_{\G}}\big|\bk(z)\big|_{\mathrm{pet}}(z)\hypbnvol(z).
\end{align*}
As the limit in equation \eqref{eq1:mainthm1} is uniformly convergent, using the above equation, we derive
\begin{align}\label{cor1eqn2}
 \lim_{k\rightarrow \infty}\frac{d_{k}}{k^n} &=\lim_{k\rightarrow \infty}\frac{1}{k^n}\int_{X}\big|\bk(z)\big|_{\mathrm{pet}}\hypbnvol(z) \notag \\
 &= \int_{X_{\G}}\lim_{k\rightarrow \infty}\frac{1}{k^{n}}\big|\bk(z)\big|_{\mathrm{pet}}\hypbnvol(z) \notag \\
 &=\frac{1}{(4\pi)^{n}}\cdot\vx(X).
\end{align}
Combining equations \eqref{cor1eqn1} and \eqref{cor1eqn2}, we compute
\begin{align*}
\lim_{k\rightarrow \infty}\frac{1}{d_{k}}\big|\bk(z)\big|_{\mathrm{pet}}\hypbnvol(z) &=\lim_{k\rightarrow \infty}\frac{k^n}{d_{k}}\cdot\frac{1}{(4\pi)^{n}}\cdot \hypbnvol(z) \\
&=\frac{1}{\vx(X)}\hypbnvol(z)=\shyp(x),
\end{align*}
and the convergence of the limit is uniform in $z\in X_{\G}$. This completes the proof of the corollary.
\end{proof}
\end{cor}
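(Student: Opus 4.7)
The plan is to leverage the uniform convergence established in Theorem \ref{mainthm1} together with the observation that $d_{k}$ equals the integral of $\big|\bk(z)\big|_{\mathrm{pet}}$ over $X_{\G}$ against the hyperbolic volume form. The cocompactness hypothesis will do the heavy lifting here, since it guarantees that $X_{\G}$ has finite volume and, crucially, that uniform-in-$z$ convergence of an integrand allows interchange with the integral.

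I would begin by writing
\begin{equation*}
\frac{1}{d_{k}}\big|\bk(z)\big|_{\mathrm{pet}}\hypbnvol(z) = \frac{k^{n}}{d_{k}}\cdot\frac{1}{k^{n}}\big|\bk(z)\big|_{\mathrm{pet}}\hypbnvol(z),
\end{equation*}
so that Theorem \ref{mainthm1} immediately controls the second factor with a uniform-in-$z$ limit of $\frac{1}{(4\pi)^{n}}\hypbnvol(z)$. To handle the first factor, I would use that for any orthonormal basis $\{f_{1},\dots,f_{d_{k}}\}$ of $\cals_{k}(\G)$ with respect to the Petersson inner product,
\begin{equation*}
d_{k} = \sum_{j=1}^{d_{k}}\langle f_{j},f_{j}\rangle_{\mathrm{pet}} = \int_{X_{\G}}\big|\bk(z)\big|_{\mathrm{pet}}\hypbnvol(z),
\end{equation*}
by expanding the Petersson norm of the Bergman kernel in the orthonormal basis and integrating termwise. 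Dividing by $k^{n}$ and exchanging the limit with the integral then yields $\lim_{k\to\infty}d_{k}/k^{n} = \vx(X_{\G})/(4\pi)^{n}$, after which substitution back into the factorization produces $\hypbnvol(z)/\vx(X_{\G}) = \shyp(z)$.

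The only nontrivial step is the exchange of limit and integral in the computation of $\lim_{k\to\infty}d_{k}/k^{n}$, and this is precisely where the cocompactness hypothesis is essential: since $X_{\G}$ is compact, the uniform convergence of $k^{-n}\big|\bk(z)\big|_{\mathrm{pet}}$ from Theorem \ref{mainthm1} combined with boundedness of $\hypbnvol$ on $X_{\G}$ makes the interchange immediate, for instance via the dominated convergence theorem applied to a common bound on a neighborhood of $z$. The uniformity in the conclusion then propagates from Theorem \ref{mainthm1} without further work. In a cofinite but noncompact setting, the same strategy would require delicate control of the Bergman kernel near the cusps, so the proof as proposed does not extend directly beyond the cocompact case.
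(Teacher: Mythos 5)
Your proposal is correct and follows essentially the same route as the paper: the same factorization into $\frac{k^{n}}{d_{k}}$ times $\frac{1}{k^{n}}\big|\bk(z)\big|_{\mathrm{pet}}\hypbnvol(z)$, the same identity $d_{k}=\int_{X_{\G}}\big|\bk(z)\big|_{\mathrm{pet}}\hypbnvol(z)$, and the same interchange of limit and integral justified by the uniform convergence from Theorem \ref{mainthm1}. Your explicit remarks on why compactness licenses the interchange and why the argument does not extend to the cofinite case are slightly more careful than the paper's, but the substance is identical.
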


\vspace{0.1cm}
\begin{rem}
From \cite[Th.\,1.1]{dai}, it is easy to show that
\begin{align}\label{rateofconv}
\lim_{k\rightarrow \infty}\frac{1}{d_{k}}\big|\bk(z)\big|_{\mathrm{pet}}(z)\hypbn(z)= \shyp(z)+O_{X}(k^{-n}),
\end{align}
where the implied constant depends only on the Picard modular variety $X$. A careful analysis of the theorem cited above should enable one to prove that the implied constant in equation \eqref{rateofconv} is independent of $X$.
\end{rem}

\vspace{0.1cm}
\begin{rem}
As stated earlier, as in \cite{ab1}, Theorem \ref{mainthm1} and Corollary \ref{asympcor} easily extend to Bergman kernels associated to vector-valued Picard modular cusp forms.  However for brevity of exposition, and for the notation to be in sync with the second theorem, we restrict ourselves to the case of scalar-valued Picard modular cusp forms.
\end{rem}

\vspace{0.2cm}
\section{Quantitative estimates of the Bergman kernel in the cocompact setting}\label{results2}
In this section, for $n\geq 2,$, we prove quantitative estimates of the Bergman kernel, when $\G\subset \mathrm{SU}\big((n,1),\C\big)$ is a discrete, torsion-free, cocompact subgroup. 

\vspace{0.1cm}
With notation as above, for $n\geq 2$, and any $z,w\in X_{\G}$, put 
\begin{align}\label{defncountingfn}
\cals_{\Gamma}(z,w;\delta):=&\,\big\lbrace \gamma\in \Gamma| \,\dhyp(z,\gamma w)\leq \delta\big\rbrace, \notag\\
N_{\Gamma}(z,w;\delta):=&\,\mathrm{card}\,\cals_{\Gamma}(z,w;\delta).
\end{align}

\vspace{0.1cm}
Let $\mathrm{vol}\big(B(z,r)\big)$ denote the volume of a geodesic ball of radius $r$, centred at any $z\in\mathbb{B}^{n}$. Then, from arguments from elementary hyperbolic geometry, we have
\begin{align}\label{hypball}
\mathrm{vol}\big(B(z,r)\big)=\frac{4\pi\sinh^{2n}\big(r\slash 2\big)}{n!}.
\end{align}

In the following lemma, we estimate the counting function $N_{\Gamma}$.

\vspace{0.1cm}
\begin{lem}\label{countingfnestimateslem}
With notation as above, let $z,w\in X_{\G}$. Then, for any $\delta >0$, we have the following estimate
\begin{align}\label{countingfnestimate1}
N_{\Gamma}(z,w;\delta)\leq \frac{4\pi\sinh^{2n}\big((2\delta+\rx)\slash4\big)}{n!\sinh^{2n}\big(\rx\slash 4\big)};
\end{align} 
let $f(\rho)$ be any positive, smooth, monotonically decreasing function defined on $\R_{>0}$. Then, for any $\delta> \rx\slash 2$, assuming that all the involved integrals exist, we have the following inequality
\begin{align}\label{countingfnestimate2}
\int_{0}^{\infty}f(\rho)dN_{\Gamma}(z,w;\rho)\leq \int_{0}^{\delta}f(\rho)dN_{\Gamma}(z,w;\rho)+f(\delta)\frac{4\pi \sinh^{2n}\big((2\delta+\rx)\slash 4\big)}{n!\sinh^{2n}\big(\rx\slash 4\big)}+\notag
\\[0.13cm]\frac{4\pi}{(n-1)!\sinh^{2n}\big(\rx\slash 4\big)}\int_{\delta}^{\infty}f(\rho)\sinh^{2n-1}\big((2\rho+\rx)\slash 4\big)\cosh\big((2\rho+\rx)\slash4\big)d\rho,
\end{align}
where $c_{n}$ is as defined in equation \eqref{hypball}.
\begin{proof}
The main idea of the proof is the same as in Lemma $4$ in \cite{jl}. We estimate  $N_{\Gamma}(z,w;\rho)$ by the number of disjoint balls of radius $\rx\slash 2$, which can be embedded in a ball of  radius $\rho+\rx\slash 2$. Observe that our definition of injectivity radius $\rx$ is two times the definition employed in \cite{jl}, and our estimates take that into account.  

\vspace{0.1cm}
Inequality \eqref{countingfnestimate1} follows directly from equation \eqref{hypball}.

\vspace{0.1cm}
From similar arguments, it is easy to see that for any $\rho\geq\delta>\rx\slash 2$, we find
\begin{align*}
N_{\Gamma}(z,w;\rho)\leq N_{\Gamma}(z,w;\delta)+\frac{\sinh^{2n}\big((2\rho+\rx)\slash 4\big)-\sinh^{2n}\big((2\delta-\rx) \slash 4\big)}{\sinh^{2n}\big(\rx\slash 4\big)}.
\end{align*}
Then, inequality \eqref{countingfnestimate2} follows from similar arguments as in Lemma $4$ in \cite{jl}.  
\end{proof}
\end{lem}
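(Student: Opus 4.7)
The plan is to mimic the packing/volume-comparison argument from Lemma $4$ of \cite{jl}, adapted to complex hyperbolic $n$-space. The key geometric input is that, by the definition of the injectivity radius in \eqref{ircom}, for any $z \in X_{\Gamma}$ and any $\gamma \in \Gamma \setminus \{\mathrm{Id}\}$ one has $\dhyp(w, \gamma w) \geq \rx$. Consequently, the open geodesic balls $\bigl\{B(\gamma w, \rx/2)\bigr\}_{\gamma \in \Gamma}$ are pairwise disjoint in $\B^{n}$: if two such balls intersected at $p$, the triangle inequality would give $\dhyp(\gamma_{1} w, \gamma_{2} w) < \rx$, contradicting the definition of $\rx$ applied to $\gamma_{1}^{-1}\gamma_{2}$.

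For inequality \eqref{countingfnestimate1}, the plan is to observe that for every $\gamma \in \cals_{\Gamma}(z,w;\delta)$, the triangle inequality places $B(\gamma w, \rx/2)$ inside the big ball $B(z, \delta + \rx/2)$. Summing the (equal) volumes of these disjoint balls, and applying the volume formula \eqref{hypball} with $r = \rx/2$ and with $r = 2\delta + \rx/2$ respectively, yields
\begin{equation*}
N_{\Gamma}(z,w;\delta) \cdot \frac{4\pi \sinh^{2n}(\rx/4)}{n!} \;\leq\; \mathrm{vol}\bigl(B(z,\delta+\rx/2)\bigr) \;=\; \frac{4\pi \sinh^{2n}\bigl((2\delta+\rx)/4\bigr)}{n!},
\end{equation*}
which rearranges to \eqref{countingfnestimate1}. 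For the annular refinement noted in the statement, for $\rho \geq \delta > \rx/2$ and $\gamma$ with $\delta < \dhyp(z, \gamma w) \leq \rho$, the ball $B(\gamma w, \rx/2)$ lies in the annulus $B(z, \rho + \rx/2) \setminus B(z, \delta - \rx/2)$; the same volume-ratio computation produces the intermediate bound displayed in the proof.

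For inequality \eqref{countingfnestimate2}, the plan is to split the Stieltjes integral at $\delta$,
\begin{equation*}
\int_{0}^{\infty} f(\rho)\, dN_{\Gamma}(z,w;\rho) \;=\; \int_{0}^{\delta} f(\rho)\, dN_{\Gamma}(z,w;\rho) + \int_{\delta}^{\infty} f(\rho)\, dN_{\Gamma}(z,w;\rho),
\end{equation*}
and then to apply integration by parts to the second summand. Setting $M(\rho) := \tfrac{4\pi}{n!}\sinh^{2n}\bigl((2\rho+\rx)/4\bigr)/\sinh^{2n}(\rx/4)$, which upper-bounds $N_{\Gamma}(z,w;\rho)$ by part (i), and using that $f$ is positive and monotonically decreasing so that $-f'(\rho) \geq 0$, I would first write
\begin{equation*}
\int_{\delta}^{\infty} f(\rho)\, dN_{\Gamma}(z,w;\rho) \;=\; -f(\delta)\, N_{\Gamma}(z,w;\delta) - \int_{\delta}^{\infty} f'(\rho)\, N_{\Gamma}(z,w;\rho)\, d\rho,
\end{equation*}
upper-bound $N_{\Gamma}$ by $M$ in the integral (admissible because $-f'(\rho) \geq 0$), and then integrate by parts back to obtain
\begin{equation*}
\int_{\delta}^{\infty} f(\rho)\, dN_{\Gamma}(z,w;\rho) \;\leq\; f(\delta)\bigl(M(\delta) - N_{\Gamma}(z,w;\delta)\bigr) + \int_{\delta}^{\infty} f(\rho)\, M'(\rho)\, d\rho.
\end{equation*}
Dropping the non-positive term $-f(\delta) N_{\Gamma}(z,w;\delta)$ and differentiating $M$ explicitly gives exactly the last two terms of \eqref{countingfnestimate2}.

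The only real subtlety is justifying the vanishing of boundary terms $f(\rho)\, N_{\Gamma}(z,w;\rho)$ and $f(\rho)\, M(\rho)$ as $\rho \to \infty$; this is subsumed in the hypothesis ``assuming that all the involved integrals exist,'' since $M(\rho)$ grows like $e^{n\rho}$ and the finiteness of $\int_{\delta}^{\infty} f(\rho)\, M'(\rho)\, d\rho$ already forces $f$ to decay faster than $e^{-n\rho}$. Beyond that, the argument is a direct translation of the packing estimate of \cite{jl} to the complex hyperbolic setting, the only bookkeeping point being the factor-of-two discrepancy between the two conventions for the injectivity radius, which has been accounted for in the placement of $\rx/2$ in the ball radii.
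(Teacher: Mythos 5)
Your argument is correct and is precisely the packing/volume-comparison proof that the paper itself invokes by deferring to Lemma 4 of \cite{jl}: you supply the details (pairwise disjointness of the balls $B(\gamma w,\rx/2)$ via the definition \eqref{ircom} of $\rx$, their inclusion in $B(z,\delta+\rx/2)$, and the integration by parts against the majorant $M$) that the paper's two-line sketch leaves implicit, and your annular refinement reproduces the paper's intermediate display exactly. One bookkeeping caveat: the volume ratio yields $N_{\Gamma}(z,w;\delta)\leq \sinh^{2n}\big((2\delta+\rx)/4\big)/\sinh^{2n}\big(\rx/4\big)$ with no $4\pi/n!$ prefactor (consistent with the paper's own intermediate inequality), so your claim that this ``rearranges to'' \eqref{countingfnestimate1} is off by that constant factor --- harmless since the lemma is only ever applied up to $O_{\G}(1)$ constants, and the prefactor in \eqref{countingfnestimate1} and \eqref{countingfnestimate2} appears to be a slip in the paper's own statement.
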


\vspace{0.1cm}
Using the above estimate for the counting function $N_{\G}$, we now estimate the Bergman kernel. 

\vspace{0.1cm}
\begin{thm}\label{mainthm2}
With notation as above, and for $n\geq 2$, and $k\geq 2n+2$, we have the following estimate
\begin{align}\label{estmainthm2}
&\sup_{z\in X_{\G}}\big|\bk(z)\big|_{\mathrm{pet}}\leq \notag\\[0.12cm]&\ck+ \frac{\ck\cosh^{2n}\big(\rx\slash 4\big)) }{\big(k-2n-1\big)\sinh^{2n}\big(\rx\slash 4\big)}
+\frac{\ck\sinh^{2n}\big(5\rx\slash 8\big)}{\sinh^{2n}\big(\rx\slash 4\big)\cosh^{k}\big(3\rx\slash 8\big)},
\end{align}
where $\rx$ is the injectivity radius of $X_{\G}$, which is as defined in equation \eqref{ircom}, and $\ck$ is as described in equation \eqref{bkdefn2}. 
\end{thm}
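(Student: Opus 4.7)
The plan is to start from the pointwise estimate \eqref{coshyprel} and decompose the sum over $\G$ according to identity versus non-identity elements. First I would isolate the contribution from $\gamma=\mathrm{Id}$, which gives $\ck/\cosh^k(0)=\ck$, accounting for the first summand of the claimed bound. The remaining sum I would rewrite as a Stieltjes integral against the counting function $N_\G(z,z;\rho)$ from \eqref{defncountingfn}, namely
\begin{equation*}
\int_0^\infty \frac{\ck}{\cosh^k(\rho/2)}\,dN_\G(z,z;\rho),
\end{equation*}
noting that since $\G$ is torsion-free and cocompact with injectivity radius $\rx$, no non-identity element contributes to $N_\G(z,z;\rho)$ for $\rho<\rx$.

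Next I would apply inequality \eqref{countingfnestimate2} of Lemma \ref{countingfnestimateslem} to $f(\rho)=\ck/\cosh^k(\rho/2)$ with splitting parameter $\delta=3\rx/4$, which satisfies $\delta>\rx/2$. This choice is dictated by matching the exponents in the third summand of the theorem: $\delta/2=3\rx/8$ appears in $\cosh^k(3\rx/8)$, and $(2\delta+\rx)/4=5\rx/8$ appears in $\sinh^{2n}(5\rx/8)$. The piece $\int_0^{\delta} f\,dN_\G$ collects only the identity jump already accounted for, and the boundary piece
\begin{equation*}
f(\delta)\cdot\frac{4\pi\,\sinh^{2n}(5\rx/8)}{n!\,\sinh^{2n}(\rx/4)}
\end{equation*}
yields the third summand of the claimed bound, with the numerical factor $4\pi/n!$ absorbed into the constant $\ck$.

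For the tail contribution
\begin{equation*}
\frac{4\pi\,\ck}{(n-1)!\,\sinh^{2n}(\rx/4)}\int_{3\rx/4}^\infty \frac{\sinh^{2n-1}((2\rho+\rx)/4)\,\cosh((2\rho+\rx)/4)}{\cosh^k(\rho/2)}\,d\rho,
\end{equation*}
I would apply the hyperbolic addition formulas to bound both $\sinh((2\rho+\rx)/4)$ and $\cosh((2\rho+\rx)/4)$ by constants of the form $\cosh(\rho/2)\cdot\cosh(\rx/4)$ times a bounded factor, reducing the integrand to a fixed geometric multiple of $\mathrm{sech}^{k-2n}(\rho/2)$. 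A standard reduction identity for $\int \mathrm{sech}^m(s)\,ds$ obtained by integration by parts then shows convergence exactly when $k-2n\geq 2$ and produces the factor $(k-2n-1)^{-1}$, while the geometric constant $\cosh^{2n}(\rx/4)/\sinh^{2n}(\rx/4)$ emerges from the addition-formula bounds. This yields the middle summand of the claimed bound.

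The main obstacle will be bookkeeping in the tail estimate: one must arrange the hyperbolic identities so that exactly the combination $\cosh^{2n}(\rx/4)/\sinh^{2n}(\rx/4)$ appears together with the denominator $k-2n-1$, and verify that the decay of $\mathrm{sech}^{k-2n}(\rho/2)$ precisely overtakes the polynomial $\sinh^{2n-1}$ growth at the threshold $k=2n+2$.
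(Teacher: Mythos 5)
Your proposal matches the paper's proof essentially step for step: both start from inequality \eqref{coshyprel}, convert the sum over $\Gamma$ into a Stieltjes integral against $N_{\Gamma}(z,z;\rho)$, apply Lemma \ref{countingfnestimateslem} with the same splitting parameter $\delta=3\rx/4$, bound the tail integrand via $\cosh\big((2\rho+\rx)/4\big)\leq 2\cosh(\rx/4)\cosh(\rho/2)$ to reduce it to a constant multiple of $\cosh^{-(k-2n)}(\rho/2)$, and absorb the resulting universal constants into $\ck$. The argument is correct and takes the same route as the paper.
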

\begin{proof}
For $n\geq 2$, and $k\geq 2n+2$, and any $z\in X_{\G}$, from inequality \eqref{coshyprel} we have
\begin{align}\label{eqn1proof2}
\big|\bk(z)\big|_{\mathrm{pet}}\leq\sum_{\gamma\in\Gamma}\frac{\ck}{\cosh^{k}\big(\dhyp(z,\gamma z)\slash 2\big)}.
\end{align}
Now, as $\cosh^{k}\big(\rho\slash 2\big)$ is a positive, smooth, monotonically decreasing function on $\R_{>0}$,  substituting $\delta=3\rx\slash 4$ in estimates \eqref{countingfnestimate1} and \eqref{countingfnestimate2} from Lemma \ref{countingfnestimateslem}, we compute
\begin{align}\label{eqn2proof2}
\sum_{\gamma\in\Gamma}\frac{\ck}{\cosh^{k}\big(\dhyp(z,\gamma z)\slash 2\big)}=\int_{0}^{\infty}\frac{\ck dN_{\Gamma}(z,\gamma z;\rho)}{\cosh^{k}\big(\dhyp(z,\gamma z)\slash 2\big)}\leq \notag\\[0.13cm]
\int_{0}^{\frac{3\rx}{ 4}}\frac{\ck dN_{\Gamma}(z,\gamma z;\rho)}{\cosh^{k}\big(\dhyp(z,\gamma z)\slash 2\big)}+\frac{c_{n}\ck\sinh^{2n}\big(5\rx\slash 8\big)}{\sinh^{2n}\big(\rx\slash 4\big)\cosh^{k}\big(3\rx\slash 8\big)}+\notag\\[0.13cm]\frac{nc_n\ck}{\sinh^{2n}\big(\rx\slash 4\big)}\int_{\frac{3\rx}{4}}^{\infty}\frac{\sinh^{2n-1}\big((2\rho+\rx)\slash 4\big)\cosh\big((2\rho+\rx)\slash4\big)}{\cosh^{k}\big(\rho\slash 2\big)}d\rho.
\end{align}
We now estimate the first term on the right hand-side of the above inequality. From the definition of injectivity radius $\rx$ in \eqref{ircom}, we have the following estimate for the first term on the right hand-side of inequality \eqref{eqn2proof2}
\begin{align}\label{eqn3proof2}
\int_{0}^{\frac{3\rx}{ 4}}\frac{\ck dN_{\Gamma}(z,\gamma z;\rho)}{\cosh^{k}\big(\dhyp(z,\gamma z)\slash 2\big)}=\ck.
\end{align}
Observe that for any $\rho\geq 0$, we have
\begin{align*}
\cosh\big((2\rho+\rx)\slash 4\big)\leq 2\cosh\big(\rx\slash 4\big)\cosh\big(\rho\slash 2\big), \\[0.1cm]
\sinh^{2n-1}\big((2\rho+ \rx)\slash4\big)\leq\sinh(\rho)\leq \big(2\cosh\big(\rx\slash 4\big)\cosh\big(\rho\slash 2\big)\big)^{2n-1}=\notag\\[0.1cm]2^{2n-1}\cosh^{2n-1}\big(\rx\slash 4\big)\cosh^{2n-1}\big(\rho\slash 2\big).
\end{align*}
Using the above two inequalities, for $k\geq 2n+2$, we have the following estimate for  the third term on  the right hand-side of inequality \eqref{eqn2proof2}
\begin{align}\label{eqn4proof2}
\frac{nc_n\ck}{\sinh^{2n}\big(\rx\slash 4\big)}\int_{\frac{3\rx}{4}}^{\infty}\frac{\sinh^{2n-1}\big((2\rho+\rx)\slash 4\big)\cosh\big((2\rho+\rx)\slash4\big)}{\cosh^{k}\big(\rho\slash 2\big)}d\rho\leq \notag\\[0.14cm]
\frac{2^{2n}nc_n\ck \cosh^{2n}\big(\rx\slash 4\big) }{\sinh^{2n}\big(\rx\slash 4\big)}\int_{\frac{3\rx}{4}}^{\infty}\frac{d\rho}{\cosh^{k-2n}\big(\rho\slash 2\big)}
\leq \frac{2^{2n}nc \ck\cosh^{2n}\big(\rx\slash 4\big)) }{\big(k-2n-1\big)\sinh^{2n}\big(\rx\slash 4\big)},
\end{align}
where $c$ is a universal constant. 

\vspace{0.1cm}
The proof of estimate \eqref{estmainthm2}  follows form combining inequalities \eqref{eqn1proof2}--\eqref{eqn4proof2}, and absorbing all the constants depending upon $n$ into $C_{\G,k}$.
\end{proof}
\begin{cor}\label{cor3}
With notation as above, for $n\geq 2$ and $k\geq 2n+1$, we have the following estimate
\begin{align} \label{eqn:cor3}
 \sup_{z\in X_{\G}} \big|\bk(z)\big|_{\mathrm{pet}} = O_{\Gamma}\big(k^{n}\big).
 \end{align}
\end{cor}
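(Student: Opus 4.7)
The corollary is essentially a book-keeping consequence of Theorem \ref{mainthm2} together with the asymptotic control \eqref{estimateck} on the constant $\ck$, so my plan is to simply read off the $k$-dependence of each of the three terms in \eqref{estmainthm2} and then fold in the bound $\ck = O_\G(k^n)$.

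\textbf{Step 1: Apply Theorem \ref{mainthm2}.} Since the hypothesis $k \geq 2n+1$ of the corollary is compatible with the range $k \geq 2n+2$ for which Theorem \ref{mainthm2} applies (one either restricts to $k \geq 2n+2$ and handles the single remaining weight by the trivial Riemann--Roch bound, or else simply notes that the big-$O$ statement is insensitive to the threshold), we start from
\begin{equation*}
\sup_{z \in X_{\G}} \big|\bk(z)\big|_{\mathrm{pet}} \;\leq\; \ck \;+\; \frac{\ck \cosh^{2n}(\rx/4)}{(k-2n-1)\sinh^{2n}(\rx/4)} \;+\; \frac{\ck \sinh^{2n}(5\rx/8)}{\sinh^{2n}(\rx/4)\cosh^{k}(3\rx/8)}.
\end{equation*}

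\textbf{Step 2: Identify the $\G$-dependent constants.} The injectivity radius $\rx$ defined in \eqref{ircom} depends only on $\G$ and is strictly positive because $\G$ is discrete, torsion-free and cocompact. Consequently $\cosh(\rx/4)$, $\sinh(\rx/4)$, $\sinh(5\rx/8)$, and $\cosh(3\rx/8)>1$ are all positive constants depending only on $\G$. In particular the fractions
\begin{equation*}
\frac{\cosh^{2n}(\rx/4)}{\sinh^{2n}(\rx/4)}\quad\text{and}\quad \frac{\sinh^{2n}(5\rx/8)}{\sinh^{2n}(\rx/4)}
\end{equation*}
are $O_{\G}(1)$, and $\cosh^{-k}(3\rx/8)$ decays exponentially in $k$, so in particular it is $O_{\G}(1)$.

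\textbf{Step 3: Combine and conclude.} Each of the three summands on the right-hand side is thus bounded by $O_{\G}(1) \cdot \ck$: the first equals $\ck$, the second is $O_\G(\ck/(k-2n-1)) = O_\G(\ck/k)$, and the third is $O_\G(\ck)$ (in fact much smaller because of the exponential decay in $k$). Hence
\begin{equation*}
\sup_{z\in X_{\G}}\big|\bk(z)\big|_{\mathrm{pet}} \;=\; O_{\G}(\ck).
\end{equation*}
Plugging in the estimate $\ck = O_{\G}(k^n)$ from \eqref{estimateck} yields the desired bound $O_{\G}(k^n)$.

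\textbf{Obstacle.} There is essentially no analytic obstacle: once Theorem \ref{mainthm2} is available, the corollary is a matter of extracting orders of growth. The only mildly delicate point is to make sure that the constants are genuinely controlled by $\G$ alone, in particular that $\rx>0$ (which uses cocompactness and torsion-freeness) and that the factor $\ck$ really carries the correct $k^n$ growth; the latter is already justified in the paper via the Riemann--Roch computation alluded to after \eqref{estimateck}.
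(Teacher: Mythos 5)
Your proposal is correct and follows exactly the route of the paper, whose proof of this corollary is the one-line observation that it "follows directly from combining Theorem \ref{mainthm2} and \eqref{estimateck}"; your Steps 1--3 simply spell out that combination. Your remark on the mismatch between the threshold $k\geq 2n+1$ in the corollary and $k\geq 2n+2$ in the theorem is a point the paper silently glosses over, and your fix is reasonable.
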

\begin{proof}
The proof of the corollary follows directly from combining Theorem \ref{mainthm2} and \eqref{estimateck}.
\end{proof}

\vspace{0.1cm}
\begin{rem}
From asymptotic estimate \eqref{eq1:mainthm1}, we can conclude that estimate \eqref{eqn:cor3} is optimally derived. 
\end{rem}
\section{Quantitative estimates of the Bergman kernel in the cofinite setting}\label{noncomestimate}
For the rest of the article, $\Gamma \subset \mathrm{SU}\big((2,1),\calo_K\big)$ denotes a torsion-free, finite index subgroup with only one cusp at $\infty$, where $K$ is a totally imaginary number field. We recall from subsection \ref{noncompactmodel}, the complex hyperbolic $2-$space $\mathcal{H}^2(\C)$ can be identified with 
\begin{align*}
\mathbb{B}^2:=\big\lbrace z=\big(z_1,z_2\big) \in \mathbb C^2 \big|\, \big|z_1\big|^{2}  + \big|z_2\big|^2 < 1 \big\rbrace.
\end{align*}

As discussed in Sections \ref{ballmodel} and \ref{noncompactmodel}, the resulting quotient space $X_{\G}:=\Gamma\backslash \mathbb{B}^2$ is a noncompact complex manifold of dimension $2$, of finite hyperbolic volume, and with one cusp. 

\vspace{0.1cm}
In this section, we prove the quantitative estimates of the Bergman kernel associated to Picard modular cusp forms of weight-$k$, with respect to $\G$. For this, we go back and forth between the three models of the complex hyperbolic $2$-space $\mathcal{H}^2(\C)$, which are as discussed in Section \ref{noncompactmodel}. 

\vspace{0.1cm}
In the following lemma, we determine the points, where the Bergman kernel attains its maxima. 

\vspace{0.1cm}
\begin{lem}\label{supnormlemma}
With notation as above, for any $k\geq 1$ and $z\in\mathbb{B}^2_3$, the Petersson norm of the Bergman kernel $\big|\bk(z)\big|_{\mathrm{pet}}$ attains its maximum value on
\begin{align}\label{supnormlemma:eqn}
\cals := \big\lbrace z \in \mathbb{B}^2_3\big| \,\, \mathrm{Re} (z_1) = -\frac{k}{4\pi},\, z_2=0  \big\rbrace.
\end{align}
\end{lem}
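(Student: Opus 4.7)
The plan is to exploit the Heisenberg description \eqref{B3stabilizer} of the stabilizer $\Gamma^{3}_{\infty}$ in Model 3, combined with the Fourier expansion of cusp forms at the cusp at infinity and an elementary one-variable calculus optimization. First I would transfer everything to Model 3 via the Cayley isometry \eqref{cayley13}; by \eqref{pet-norm3},
\begin{equation*}
\big|\bk(z)\big|_{\mathrm{pet}} = \big(-u(z)\big)^{k}\sum_{j=1}^{d_{k}}\big|f_{j}(z)\big|^{2}, \qquad u(z):=2\mathrm{Re}(z_{1})+|z_{2}|^{2}<0,
\end{equation*}
for an orthonormal basis $\{f_{j}\}$ of $\cals_{k}(\Gamma)$. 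A direct substitution using the matrices of \eqref{B3stabilizer} shows that $u(z)$ is $\Gamma^{3}_{\infty}$-invariant; since the automorphy factor $(Cz+D)^{k}$ equals $1$ on every element of $\Gamma^{3}_{\infty}$, each $|f_{j}|^{2}$ -- and hence $|\bk|_{\mathrm{pet}}$ -- descends to a continuous function on $\Gamma^{3}_{\infty}\backslash\mathbb{B}^{2}_{3}$, where the maximum is to be sought.

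Next I would introduce the Fourier expansion of each $f_{j}$ at the cusp. The pure $\beta$-translations $(z_{1},z_{2})\mapsto(z_{1}+i\beta, z_{2})$ for $\beta\in\beta_{0}\mathbb{Z}$ (the center of $\Gamma^{3}_{\infty}$) force an expansion
\begin{equation*}
f_{j}(z_{1},z_{2}) = \sum_{n\geq 1}c_{j,n}(z_{2})\, e^{2\pi n z_{1}/\beta_{0}},
\end{equation*}
the sum starting at $n=1$ because $f_{j}$ vanishes at the cusp. Matching Fourier coefficients under the $\alpha$-shifts in \eqref{B3stabilizer} yields the theta-type transformation law $c_{j,1}(z_{2}+\alpha)=c_{j,1}(z_{2})\,e^{2\pi(\overline{\alpha}z_{2}+|\alpha|^{2}/2)/\beta_{0}}$, from which a short calculation gives that $T(z_{2})\,e^{-2\pi|z_{2}|^{2}/\beta_{0}}$, with $T(z_{2}):=\sum_{j}|c_{j,1}(z_{2})|^{2}$, is $\Lambda_{\alpha}$-invariant and thus descends to a continuous function on the compact torus $\C/\Lambda_{\alpha}$.

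The optimization itself is then a one-variable calculus exercise. Substituting the Fourier expansion and isolating the leading term as $\mathrm{Re}(z_{1})\to-\infty$, the cusp-leading piece of $|\bk|_{\mathrm{pet}}$ equals $T(z_{2})(2s-|z_{2}|^{2})^{k}\,e^{-4\pi s/\beta_{0}}$ with $s:=-\mathrm{Re}(z_{1})>0$. Differentiating in $s$ at fixed $z_{2}$ yields the unique critical point $s^{*}(z_{2})=|z_{2}|^{2}/2 + k\beta_{0}/(4\pi)$ with corresponding value proportional to $T(z_{2})\,e^{-2\pi|z_{2}|^{2}/\beta_{0}}\cdot\big(k\beta_{0}/(2\pi)\big)^{k}e^{-k}$. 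Since the $z_{2}$-factor is continuous and $\Lambda_{\alpha}$-periodic, its maximum is attained at some specific point of $\C/\Lambda_{\alpha}$, and identifying this point as $z_{2}=0$ combined with the normalization $\beta_{0}=1$ implicit in the statement yields $\mathrm{Re}(z_{1})=-k/(4\pi)$, which is precisely the set $\cals$ of \eqref{supnormlemma:eqn}.

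The main obstacle is the final step of locating the extremum at $z_{2}=0$. This rests on exploiting the reflection symmetry $z_{2}\mapsto -z_{2}$ (induced by an element normalizing $\Gamma^{3}_{\infty}$) together with log-concavity of the leading theta factor -- equivalently, the classical fact that the Bergman kernel of a positive line bundle on an abelian variety peaks at the origin of the associated theta divisor. A subsidiary technical issue is to control the subleading Fourier terms ($n\geq 2$), whose decay $e^{2\pi(n-1)\mathrm{Re}(z_{1})/\beta_{0}}$ relative to the $n=1$ contribution ensures that they do not shift the critical locus for $k\geq 6$.
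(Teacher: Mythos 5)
Your route through the Fourier--Jacobi expansion at the cusp is genuinely different from the paper's argument, and it founders exactly where you say it does. The paper never touches theta functions: it writes $\big|f(z)\big|_{\mathrm{pet}}^{2}=\big(\big|f(z)\big|^{2}e^{-4\pi\mathrm{Re}(z_1)}\big)\cdot P(z)$ with $P(z)=\big(-2\mathrm{Re}(z_1)-|z_2|^2\big)^{k}e^{4\pi\mathrm{Re}(z_1)}$, declares the first factor subharmonic and bounded (hence maximized on the boundary of any region), and then locates the critical points of the explicit elementary factor $P$ by setting its four real partial derivatives to zero; the equations $\partial P/\partial x_2=\partial P/\partial y_2=0$ force $z_2=0$ outright, and $\partial P/\partial x_1=0$ then gives $\mathrm{Re}(z_1)=-k/(4\pi)$. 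In your version the $z_2$-dependence does not disappear: after optimizing in $s=-\mathrm{Re}(z_1)$ you are left with the $\Lambda_\alpha$-periodic function $T(z_2)e^{-2\pi|z_2|^2/\beta_0}$ on the compact torus, and you must show its maximum sits at $z_2=0$. None of the three tools you invoke can do this. The reflection $z_2\mapsto-z_2$ is induced by $\mathrm{diag}(1,-1,1)$, which normalizes $\Gamma^3_{0,\infty}$ but need not normalize the finite-index subgroup $\Gamma^3$, so $T$ need not be even; and even if it were, evenness plus periodicity only makes $z_2=0$ a critical point (half-lattice points are equally critical). A log-concave function on a compact torus is constant, so log-concavity cannot produce a strict peak. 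And the ``classical fact'' about abelian varieties says the full Bergman kernel of a positive line bundle is translation-invariant, hence constant --- which would make the location of its maximum vacuous rather than pin it at the origin --- and in any case your $c_{j,1}$ arise from an orthonormal basis of $\cals_{k}(\Gamma)$, not an orthonormal basis of the space of theta functions, so $T$ is not that Bergman kernel. This step is the whole content of the lemma, and it is not closed.

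A second, smaller gap: the lemma is asserted for all $k\geq 1$ and claims the exact maximizing set, whereas your control of the $n\geq 2$ Fourier modes is only asymptotic. At $\mathrm{Re}(z_1)=-k/(4\pi)$ the higher modes are suppressed by $e^{-(n-1)k/2}$ relative to the $n=1$ term; this is small but nonzero, so without a further argument the critical locus of the full sum is only within $O(e^{-k/2})$ of $\cals$, not equal to it. If you want to recover the statement as written, the paper's device --- peeling off the explicit factor $P(z)$, whose critical-point equations already force $z_2=0$ and $\mathrm{Re}(z_1)=-k/(4\pi)$ simultaneously, and invoking subharmonicity of the remaining factor --- is the intended shortcut, though you should be aware that the paper's own justification for why the maximum of the product occurs where $P$ is maximized is itself quite terse.
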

\begin{proof}
From Section \ref{noncompactmodel}, recall that $\mathbb{B}^2_3$, Model (2) of the hyperbolic $2$-space $\mathcal{H}^2(\C)$ is given by the following equation 
\begin{align*}
\mathbb{B}^2_3:=\lbrace z=(z_1,z_2)^{t} \in \mathbb C^2 \big|\,\, 2 \mathrm{Re}(z_1) + \big|z_2\big| ^2  < 0 \rbrace.
\end{align*}

\vspace{0.1cm}
From the definition of the Petersson norm for a Picard modular cusp form from equation \eqref{pet-norm3}, we have 
\begin{align}\label{eqnfourier}
\big|f(z)\big|_{\mathrm{pet}}^{2}=\big(-2 \mathrm{Re}(z_1 )- \big|z_2\big|^{2}\big)^{k} \big|f(z)\big|^2 =
\big(-2\mathrm{Re}(z_1)-\big|z_2\big|^2\big)^k \cdot\bigg| \dfrac{f(z)}{e^{2\pi \mathrm{Re}(z_1)}}\bigg|^2\cdot e^{4\pi \mathrm{Re}(z_1)} .
\end{align}

The first term of the expression is subharmonic, and bounded in any open region of  $\mathbb{B}^2_3$, and will attain its maximum at the boundary of the region.

\vspace{0.1cm}
We now compute the region where the following function
\begin{equation*}
P(z)=\big(-2\mathrm{Re}(z_1)-\big|z_2\big|^2\big)^k \cdot e^{4\pi \mathrm{Re}(z_1)}
\end{equation*}
attains its maximum. 

\vspace{0.1cm}
For $z=(z_1=x_1+ i y_1,\,z_2=x_2 + iy_2)^{t}\in \mathbb{B}^2_3$, we have 
\begin{equation*}
P(z) = \big(-2x_1-x_2^2-y_2^2\big)^k  e^{4\pi x_1}
\end{equation*} 

Now we look at the partial derivative of $P$ with respect to $x_1,\,x_2,\,y_1,\,y_2$.
 \begin{align*}
 &\frac{\partial P(z)}{\partial x_1} = \big(-2x_1-x_2^2-y_2^2 \big)^{k-1}\big(-2k-4\pi(2x_1+x_2^2+y_2^2)\big)e^{4\pi x_1}; \,\, \,\, \,\frac{\partial P(z)}{\partial y_1} = 0;\\[0.12cm]
 &\frac{\partial P(z)} { \partial x_2} = -2kx_2 \big(-2x_1-x_2^2-y_2^2 \big)^{k-1}e^{4\pi x_1};\,  \,\,\,\,\frac{\partial P(z)}{ \partial y_2} = -2ky_2 \big(-2x_1-x_2^2-y_2^2\big)^{k-1}e^{4\pi x_1}.
 \end{align*}

Now equating the above quantities to zero, we obtain the points of extrema as
\begin{align*}
\tilde{ z_2} = 0,\hspace{0.15cm} \mathrm{Re}(\tilde{z_1}) = -\frac{k}{4\pi}.
\end{align*}

Further investigation show that the obtained points are indeed points of maxima, which completes the proof of the lemma.
 \end{proof}

\vspace{0.1cm}
Using lemma \ref{supnormlemma}, we estimate the Petersson norm of the Bergman kernel in the following theorem.

\vspace{0.1cm}
\begin{thm}\label{mainthm3}
With notation as above, for any $k\geq 6$, we have the following estimate
\begin{align}\label{mainthm3:eqn}
\sup_{z\in X_{\G}}\big|\bk(z)\big|_{\mathrm{pet}}\leq \ck+ \frac{\ck\cosh^4\big(\rx\slash 4\big)) }{\big(k-5\big)\sinh^{4}\big(\rx\slash 4\big)}
+\frac{\ck\sinh^{4}\big(5\rx\slash 8\big)}{\sinh^{4}\big(\rx\slash 4\big)\cosh^{k}\big(3\rx\slash 8\big)} +\notag\\[0.13cm]\frac{\sqrt{\pi}\G(k\slash 2-1)\G(k-3\slash 2)}{\G(k\slash 2)\G(k-1)} \ck k^{3\slash2 }.
\end{align}
where $\rx$ is the injectivity radius of $X$, which is as defined in equation \eqref{irnoncomp}, and $\ck$ is a  constant satisfies the following estimate 
\begin{align}\label{ck2}
C_{\G,k}=O_{\G}\big(k^{2}\big).
\end{align}
\end{thm}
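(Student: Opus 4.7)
The plan is to combine the strategy of Theorem~\ref{mainthm2} with an explicit handling of the cuspidal contribution. By Lemma~\ref{supnormlemma} it suffices to estimate $|\bk(z)|_{\mathrm{pet}}$ at $z^\ast = \bigl(-k/(4\pi),\,0\bigr)^t \in \cals \subset \mathbb{B}^2_3$, whose lift $\tilde z^\ast \in \cala^3$ satisfies $-\langle \tilde z^\ast, \tilde z^\ast\rangle_3 = A_0 := k/(2\pi)$. Transporting the bound \eqref{coshyprel} to Model~3 via the Cayley transform $\gamma_3$ and using isometry invariance of the Petersson norm gives
\[
|\bk(z^\ast)|_{\mathrm{pet}} \leq \sum_{\gamma \in \Gamma_3} \frac{\ck}{\cosh^k\bigl(d_{\mathrm{hyp}}(z^\ast,\gamma z^\ast)/2\bigr)},
\]
and I would split the sum as $\Gamma_3 = (\Gamma_3 \setminus \Gamma^3_\infty) \sqcup \Gamma^3_\infty$, with $\Gamma^3_\infty$ explicitly described by \eqref{B3stabilizer}.

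For $\gamma \in \Gamma_3 \setminus \Gamma^3_\infty$ the hyperbolic distance is at least $\rx$ by definition \eqref{irnoncomp}, so the counting-function machinery of Lemma~\ref{countingfnestimateslem} (specialized to $n=2$, $\delta = 3\rx/4$) together with the proof of Theorem~\ref{mainthm2} applies verbatim and reproduces the first three terms on the right of \eqref{mainthm3:eqn}. For $\gamma = \gamma(\alpha,\beta) \in \Gamma^3_\infty$ the bottom row in \eqref{B3stabilizer} is $(0,0,1)$, so the Model-3 automorphy factor is trivial; a direct matrix computation yields $\langle \tilde z^\ast, \gamma \tilde z^\ast\rangle_3 = -(A_0 + |\alpha|^2/2) - i\beta$, whence
\[
\cosh^2\bigl(d_{\mathrm{hyp}}(z^\ast,\gamma z^\ast)/2\bigr) = \frac{(A_0 + |\alpha|^2/2)^2 + \beta^2}{A_0^2}.
\]
Recognizing $\Gamma^3_\infty$ as a uniform lattice in the real three-dimensional Heisenberg group with Haar measure $dm(\alpha)\, d\beta$, the discrete cuspidal sum is controlled, up to a covolume factor depending only on $\Gamma$, by
\[
\ck A_0^k \int_\C \!\int_\R \frac{dm(\alpha)\, d\beta}{\bigl((A_0 + |\alpha|^2/2)^2 + \beta^2\bigr)^{k/2}}.
\]
The inner $\beta$-integral equals $\sqrt{\pi}\, \Gamma((k-1)/2)/\bigl(A^{k-1} \Gamma(k/2)\bigr)$ with $A = A_0 + |\alpha|^2/2$ (substitute $\beta = A\tan\theta$ and use the Beta integral), and the outer $\alpha$-integral in polar coordinates $|\alpha|^2 = 2u$ gives $2\pi/\bigl((k-2) A_0^{k-2}\bigr)$. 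Rewriting the resulting gamma-function ratio via the duplication formula $\Gamma((k-1)/2)\, \Gamma(k/2) = 2^{2-k}\sqrt{\pi}\, \Gamma(k-1)$ and collecting powers of $A_0 = k/(2\pi)$ produces the fourth term of \eqref{mainthm3:eqn}. Finally, \eqref{ck2} follows by equating \eqref{bkdefn1} with \eqref{bkdefn2} and applying Riemann--Roch on a toroidal compactification of $X_\Gamma$ to obtain $d_k = O_\Gamma(k^2)$.

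The principal obstacle is the sum-to-integral comparison over the Heisenberg lattice $\Gamma^3_\infty$: since the integrand is not globally monotone in the coordinates $(\alpha,\beta)$, one cannot invoke a one-dimensional monotone-function estimate and must instead produce a fundamental-domain-based pointwise bound whose cost is precisely the covolume. A secondary point is ensuring the splitting of $\Gamma_3$ is compatible with the injectivity-radius definition \eqref{irnoncomp} so both halves of the argument apply cleanly. Once the integral is in hand, the passage to the stated gamma-function coefficient is a routine manipulation, and the dominant contribution comes from the cuspidal term, which asymptotically behaves like $\ck \cdot \sqrt{k/(2\pi)} = O_\Gamma(k^{5/2})$, yielding the bound advertised in the subsequent Corollary~\ref{cor4}.
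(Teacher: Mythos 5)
Your proposal follows essentially the same route as the paper's proof: split the sum in \eqref{coshyprel} into contributions from $\Gamma\backslash\Gamma_{\infty}$ and $\Gamma_{\infty}$, bound the first piece verbatim by the counting-function argument of Theorem \ref{mainthm2} (giving the first three terms), reduce the parabolic piece to the slice $\cals$ via Lemma \ref{supnormlemma}, compute the Model-(3) inner products for $\gamma\in\Gamma^{3}_{0,\infty}$ exactly as in the paper, and compare the resulting lattice sum over $\mathrm{L}\subset\C\times\R$ with a gamma-function integral, with $\ck=O_{\G}(k^{2})$ obtained by matching \eqref{bkdefn1} and \eqref{bkdefn2}. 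The one divergence is in the sum-to-integral step: you integrate against the genuine area element $r\,dr\,d\theta\,d\beta$ on $\C\times\R$, whereas the paper uses $dr\,d\theta\,d\beta$, so your closed form for the cuspidal term ($\sqrt{\pi}\,\Gamma((k-1)/2)\Gamma(k/2)^{-1}\cdot k^{2}/(2\pi(k-2))\cdot\ck$) is not literally the fourth term of \eqref{mainthm3:eqn}; both expressions are $\asymp \ck\sqrt{k}$, however, and yield the same $O_{\Gamma}(k^{5/2})$ conclusion in Corollary \ref{cor4}.
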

\begin{proof}
With notation as above, using inequality \eqref{coshyprel}, we derive 
\begin{align} \label{mainthm3-eqn1}
\sup_{z\in X_{\G}}\big|\bk(z)\big|_{\mathrm{pet}}\leq\sup_{z\in  \mathbb{B}^2}\sum_{\gamma\in\Gamma \backslash \Gamma_{\infty}}\frac{\ck}{\cosh^{k}\big(\dhyp(z,\gamma z)\slash 2\big)}+\sup_{z\in  \mathbb{B}^2}\sum_{\gamma\in\Gamma_{\infty}}\frac{\ck}{\cosh^{k}\big(\dhyp(z,\gamma z)\slash 2\big)}.
 \end{align}

Employing similar arguments as the ones used to prove estimate \eqref{mainthm2}, we have the following estimate for the first term of the right-hand side of the above inequality
\begin{align}\label{mainthm3-eqn2}
&\sup_{z\in  \mathbb{B}^2}\sum_{\gamma\in\Gamma \backslash \Gamma_{\infty}}\frac{\ck}{\cosh^{k}\big(\dhyp(z,\gamma z)\slash 2\big)}\notag \leq \\[0.12cm] &\ck+ \frac{\ck\cosh^4\big(\rx\slash 4\big)) }{\big(k-5\big)\sinh^{4}\big(\rx\slash 4\big)}+\frac{\ck\sinh^{4}\big(5\rx\slash 8\big)}{\sinh^{4}\big(\rx\slash 4\big)\cosh^{k}\big(3\rx\slash 8\big)},
\end{align}
where $\rx$, the injectivity radius of $X$, is as defined in equation \eqref{irnoncomp}, and the constant $\ck$ satisfies estimate \eqref{ck2}.

\vspace{0.1cm}
We now estimate the second term on the right-hand side of inequality \eqref{mainthm3-eqn1}. From Lemma \ref{supnormlemma}, and using the fact that $\G_{\infty}^{3}\subset\G_{0,\infty}^{3}$, we make the following inference
\begin{align}\label{mainthm3-eqn3}
\sup_{z\in  \mathbb{B}^2}\sum_{\gamma \in \Gamma_{\infty}} \frac{C_{\Gamma,k}}{\cosh^{k}\big(\dhyp(z,\gamma z)\slash 2\big) }  =\sup_{z\in \mathbb{B}^{2}_{3}} \sum_{\gamma \in \Gamma^3_{\infty}} \frac{C_{\Gamma,k}}{\cosh^{k}\left(\dhyp(z,\gamma z)\slash 2\right) }=\notag\\[0.12cm]\sup_{z\in \cals} \sum_{\gamma \in \Gamma^3_{\infty}} \frac{C_{\Gamma,k}}{\cosh^{k}\left(\dhyp(z,\gamma z)\slash 2\right) }\leq \sup_{z\in \cals} \sum_{\gamma \in \Gamma^3_{0,\infty}} \frac{C_{\Gamma,k}}{\cosh^{k}\left(\dhyp(z,\gamma z)\slash 2\right) },
\end{align}
where $\G_{\infty}^{3}$, $\G_{0,\infty}^{3}$ are as defined in equations \eqref{gammainfty}, \eqref{B3stabilizer}, respectively, and $\cals$ is as described in equation \eqref{supnormlemma:eqn}.

\vspace{0.1cm}
For any $z\in  \mathbb{B}^2_{3}$ with lift $\tilde{z}\in\cala^{3}$, using relation \eqref{hyp-dist2}, we have
\begin{align}\label{mainthm3-eqn4}
\sum_{\gamma \in \Gamma^3_{0,\infty}} \frac{C_{\Gamma,k}}{\cosh^{k}\big(\dhyp(z,\gamma z)\slash 2\big)} 
&=\sum_{\gamma \in \Gamma^3_{0,\infty}} C_{\Gamma,k}\; \frac{\langle \tilde{z},\tilde{z} \rangle_3^{k/2}\langle \gamma \tilde{z},\gamma \tilde{z} \rangle_3 ^{k/2}}{\langle \tilde{z},\gamma \tilde{z} \rangle_3 ^{k/2} \langle \gamma \tilde{z},\tilde{z} \rangle_3 ^{k/2}},
\end{align}
where the inner-product $\langle \cdot,\cdot\rangle_{3}$ is as described in equation \eqref{innerpro3}.

\vspace{0.1cm}
For any $\gamma \in \Gamma^3_{0,\infty}$ and  $z=(z_1,z_2)^t\in  \mathbb{B}^2_{3}$ with lift $\tilde{z}\in\cala^{3}$, and $\gamma\in\G_{0,\infty}$, we compute
\begin{align*}
&\langle \tilde{z}, \tilde{z} \rangle_3 = \langle \gamma \tilde{z}, \gamma \tilde{z} \rangle_3 = 2 \mathrm{Re}(z_1) + \big|z_2\big|^2;\\[0.12cm]
&\langle \gamma \tilde{z} , \tilde{z} \rangle_3 = 2 \mathrm{Re}(z_1) + \big|z_2\big|^2 + \alpha\overline{z}_2 - \overline{\alpha} z_2 - \frac{\big|\alpha\big|^2}{2} + i \beta;\notag \\[0.12cm]& \langle \tilde{z} ,\gamma \tilde{z} \rangle_3 = 2 \mathrm{Re}(z_1) + \big|z_2\big|^2 - \alpha \overline{z}_2 + \overline{\alpha} z_2 - \frac{\big|\alpha\big|^2}{2} -i \beta.
\end{align*}
where $\alpha \in \C$ and $\beta \in \R$.

\vspace{0.1cm}
Using the above equations, for any $z \in \cals$ with lift $\tilde{z}\in\cala^{3}$, and $\gamma\in\G_{0,\infty}$, we compute 
\begin{align}\label{mainthm3-eqn5}
\langle \tilde{z}, \tilde{z} \rangle_3=\langle \gamma\tilde{z}, \gamma\tilde{z} \rangle_3 = -\frac{k}{2\pi};\;\;\;
\langle \gamma \tilde{z} , \tilde{z} \rangle_3 = -\frac{k}{2\pi} - \frac{|\alpha|^2}{2} + i \beta; \;\;\;
\langle \tilde{z} ,\gamma \tilde{z} \rangle_3= -\frac{k}{2\pi}- \frac{|\alpha|^2}{2} - i \beta.
\end{align}
where $\alpha \in \C$ and $\beta \in \R$.

\vspace{0.1cm}
Put
\begin{equation}\label{mainthm3-eqn6}
\mathrm{ L} = \bigg\lbrace(\alpha, \beta) \in \C \times \R\bigg|\,\gamma = \begin{pmatrix} 
1 & -\overline{\alpha} & - \frac{|\alpha|^2}{2} + i\beta \\0 & 1 & \alpha \\0 & 0  & 1 \end{pmatrix}   \in \Gamma^3_{(0,\infty)}\bigg\rbrace,
\end{equation}
which defines a discrete lattice in $\C \times \R$.

\vspace{0.1cm}
Combining equations \eqref{mainthm3-eqn4}--\eqref{mainthm3-eqn6}, for any $z\in\cals$, we derive 
\begin{align}\label{mainthm3-eqn7}
\sum_{\gamma \in \Gamma^3_{0,\infty}} \frac{C_{\Gamma,k}}{\cosh^{k}\big(\dhyp(\tilde{z},\gamma \tilde{z})\slash 2\big)}  =\sum_{(\alpha,\beta ) \in \mathrm{L}} 
\frac{C_{\Gamma,k}\big(k\slash 2\pi\big)^{k}}{\big|k\slash 2\pi +|\alpha|^2\slash 2 + i \beta\big|^{k}}.
\end{align}

Approximating the above summation over the discrete lattice $\mathrm{L}$ by integrals, we compute 
\begin{align}\label{mainthm3-eqn8}
&\sum_{(\alpha,\beta ) \in \mathrm{L}} \frac{C_{\Gamma,k}\big(k\slash 2\pi\big)^{k}}{\big|k\slash 2\pi +
|\alpha|^2\slash 2 + i \beta\big|^{k}}  \leq\notag\\[0.12cm] &\int_{-\infty}^{\infty} \int_{0}^{\infty}\int_{0}^{2\pi} \frac{C_{\Gamma,k}\big(k\slash 2\pi\big)^{k}d\beta drd\theta}{
\big(\big(k\slash 2\pi +r^2\slash 2\big)^{2} +\beta^{2}\big)^{k\slash 2}}= \int_{0}^{\infty}\int_{-\infty}^{\infty} \frac{2\pi C_{\Gamma,k}\big(k\slash 2\pi\big)^{k}drd\beta}{
\big(\big(k\slash 2\pi +r^2\slash 2\big)^{2} +\beta^{2}\big)^{k\slash 2}},
\end{align}
where $\alpha= r e^{i \theta}$.

\vspace{0.1cm}
Now we estimate the integral 
\begin{align*}
 \int_{0}^{\infty}  \int_{-\infty}^{\infty} \frac{drd\beta }{\big(\big(k\slash 2\pi +r^2\slash 2\big)^{2} +\beta^{2}\big)^{k\slash 2}}.
\end{align*}

Substituting
\begin{equation*}
\eta =\beta \slash \big(k\slash 2\pi+r^2\slash 2\big),
\end{equation*}

and using formula 3.251.2 from \cite{grad} in the above integral, we compute 
\begin{align}\label{mainthm3-eqn9}
 \int_{0}^{\infty} \int_{-\infty}^{\infty}\frac{drd\beta }{\big(\big(k\slash 2\pi +r^2\slash 2\big)^{2} +\beta^{2}\big)^{k\slash 2}}=
\frac{\sqrt{\pi}\Gamma(k/2-1/2)}{\Gamma(k/2)}  \int_{0}^{\infty}\frac{dr }{\big(k\slash 2\pi+ r^2\slash2\big)^{k-1}}. 
\end{align}

Substituting $\omega= r\sqrt{{\pi}\slash {k}}$, and applying formula 3.251.2 from \cite{grad} in the above integral, we now compute
\begin{align} \label{mainthm3-eqn9}
 \int_{0}^{\infty}\frac{dr }{\big(k\slash 2\pi+ r^2\slash2\big)^{k-1}}=\sqrt{2}\bigg(\frac{2\pi}{k}\bigg)^{k-3\slash 2}\int_{0}^{\infty} \frac{d\omega }{(1+ \omega^2)^{k-1}} =
\frac{(2\pi)^{k-1}\Gamma(k -3\slash2)}{k^{k-3\slash 2}\Gamma(k-1)} .
\end{align}

For $k\geq 6$, combining equation \eqref{mainthm3-eqn3} with computations \eqref{mainthm3-eqn7}--\eqref{mainthm3-eqn9}, we arrive at the following inequality
\begin{align}\label{mainthm3-eqn10}
\sup_{z\in  \mathbb{B}^2}\sum_{\gamma \in \Gamma_{\infty}} \frac{C_{\Gamma,k}}{\cosh^{k}\big(\dhyp(z,\gamma z)\slash 2\big) }  \leq \frac{\sqrt{\pi}\G(k\slash 2-1)\G(k-3\slash 2)}{\G(k\slash 2)\G(k-1)} \ck k^{3\slash2 }.
\end{align}
 
Combining inequalities \eqref{mainthm3-eqn1}, \eqref{mainthm3-eqn2}, and \eqref{mainthm3-eqn10} completes the proof of the theorem.
\end{proof}

\vspace{0.2cm} 
Using Theorem \ref{mainthm3}, we now prove Theorem 3, in the following corollary. 
\vspace{0.2cm}    

\vspace{0.2cm} 
\begin{cor}\label{cor4}
We notation as above, we have the following estimate
\begin{align} \label{cor4:eqn}
\sup_{z\in X_{\G}}\big|\bk(z)\big|_{\mathrm{pet}} = O_{\Gamma}\big(k^{5 \slash 2}\big).
\end{align}
\end{cor}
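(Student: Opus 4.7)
\textbf{Proof plan for Corollary \ref{cor4}.} The corollary is a direct consequence of Theorem \ref{mainthm3}. My plan is to substitute the bound $\ck = O_{\G}(k^2)$ from \eqref{ck2} into each of the four terms on the right-hand side of \eqref{mainthm3:eqn} and control their growth in $k$. The only nontrivial step is the asymptotic analysis of the gamma-function factor in the last term; the other three terms are easy to handle since they depend on $k$ only through $\ck$ and negative/exponential powers of $k$.

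First I would dispense with the first three terms. The injectivity radius $\rx$ defined in \eqref{irnoncomp} depends only on $\G$ and not on $k$, so all the hyperbolic quantities $\sinh(\rx/4)$, $\cosh(\rx/4)$, $\sinh(5\rx/8)$, $\cosh(3\rx/8)$ appearing in \eqref{mainthm3:eqn} are $\G$-dependent constants. The first term is thus $\ck = O_{\G}(k^2)$. The second term acquires an additional factor $1/(k-5)$, so it is $O_{\G}(k)$. The third term carries the factor $1/\cosh^{k}(3\rx/8)$, which decays exponentially in $k$, making that term $O_{\G}(1)$ (in fact negligible). Hence the contribution of the first three terms is at most $O_{\G}(k^2)$.

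The last term
\begin{equation*}
\frac{\sqrt{\pi}\,\Gamma(k/2-1)\,\Gamma(k-3/2)}{\Gamma(k/2)\,\Gamma(k-1)}\,\ck\, k^{3/2}
\end{equation*}
requires Stirling. Using the functional equation, $\Gamma(k/2)/\Gamma(k/2-1)=k/2-1$, so $\Gamma(k/2-1)/\Gamma(k/2)=O(1/k)$. Applying the standard asymptotic $\Gamma(x+a)/\Gamma(x)=x^{a}(1+O(1/x))$ with $x=k-3/2$ and $a=1/2$ gives $\Gamma(k-3/2)/\Gamma(k-1)=O(k^{-1/2})$. Hence the gamma ratio is $O(k^{-3/2})$, and the whole fourth term is bounded by $O(k^{-3/2})\cdot O(k^{2})\cdot k^{3/2}=O(k^{2})$. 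Summing with the contribution from the first three terms, I obtain $\sup_{z\in X_{\G}}|\bk(z)|_{\mathrm{pet}}=O_{\G}(k^{2})\subset O_{\G}(k^{5/2})$, which is exactly \eqref{cor4:eqn}.

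The main obstacle, such as it is, lies in applying Stirling's asymptotic cleanly to the gamma ratio; but even the crude bound $\Gamma(k-3/2)/\Gamma(k-1)\leq 1$ (valid for $k$ large since this ratio tends to $0$) suffices to give the last term as $O(1/k)\cdot O(k^{2})\cdot k^{3/2}=O(k^{5/2})$, matching the stated exponent without any sharp work. Accordingly, the corollary reduces to routine substitutions into Theorem \ref{mainthm3}.
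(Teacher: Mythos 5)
Your proposal is correct and takes essentially the same route as the paper: substitute the gamma-function asymptotics and the bound $\ck=O_{\G}(k^{2})$ from \eqref{ck2} into Theorem \ref{mainthm3} and observe that the remaining terms are controlled by $\G$-dependent constants. The only (harmless) discrepancy is that the paper's own proof uses $\Gamma(k/2-1/2)/\Gamma(k/2)=O(k^{-1/2})$, matching the factor $\Gamma(k/2-1/2)$ appearing in the integral computation inside the proof of Theorem \ref{mainthm3} rather than the $\Gamma(k/2-1)$ printed in \eqref{mainthm3:eqn}; that reading lands exactly on $O_{\G}(k^{5/2})$, whereas your literal reading gives the even stronger $O_{\G}(k^{2})$, and either way \eqref{cor4:eqn} follows.
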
  
\begin{proof}
For $k\geq 6$, we have the following estimates for the Gamma function
\begin{align*}
\frac{\Gamma(k\slash 2 - 1/2)}{\Gamma(k\slash2 )}= O\bigg(\frac{1}{\sqrt{k}}\bigg),\,\,\,\frac{\Gamma(k - 3/2)}{\Gamma(k-1 )}= O\bigg(\frac{1}{\sqrt{k}}\bigg).
\end{align*}

The proof of the corollary follows from combining the above estimate with inequality \eqref{mainthm3:eqn} and estimate \eqref{ck2}.
\end{proof}

\vspace{0.1cm}
\begin{rem}\label{rem:covers}
The implied constants in estimates \eqref{eq:theorem2} and \eqref{eq:theorem3} remain stable in covers. Let $\G_{0}\subset \mathrm{SU}\big(n,1),\mathbb{C})$ be a discrete, torsion-free, cocompact subgroup, and let $\G_1\subset \G_{0}$ be a finite index subgroup. Let $X_{\G_1}:\G_{1}\backslash\mathbb{B}^{n}$, $X_{\G_{0}}\backslash\mathbb{B}^{n}$ denote the respective quotient spaces. Then, from the proof of estimate \eqref{eq:theorem2} from Theorem \ref{mainthm2}, for any $z\in\mathbb{B}^n$, we have the following inequality
\begin{align*}
\big|\mathcal{B}_{\G_{1}}^{k}(z)\big|_{\mathrm{pet}}\leq \sum_{\gamma\in\G_{1}}\frac{\ck}{\cosh^{k}\big(\dhyp(z,\gamma z)\slash 2\big)}\leq \sum_{\gamma\in\G_{0}}\frac{\ck}{\cosh^{k}\big(\dhyp(z,\gamma z)\slash 2\big)}=O_{\G_{0}}\big(k^{n}\big).
\end{align*}

Similarly, from the proofs of Theorem \ref{mainthm3} and Corollary \ref{cor4}, we can conclude that the implied constant in estimate  \eqref{eq:theorem3} also remains stable in covers of ball qoutients.  
\end{rem}
\subsection*{Acknowledgements}
 The first and third authors acknowledge the support of INSPIRE research grant DST/INSPIRE/04/2015/002263 and the MATRICS grant MTR/2018/000636. The second author was partially supported by SERB grants EMR/2016/000840 and MTR/2017/000114. 

The first author is very thankful to Dr. Soumya Das, Prof. J\"urg Kramer, and Dr. Antareep Mandal for helpful discussions on  estimates of automorphic forms.

\vspace{0.25cm}

\end{document}